\newtheorem{thm}{Theorem}[section]
\newtheorem{prop}[thm]{Proposition}
\newtheorem{lem}[thm]{Lemma}
\newtheorem{rem}[thm]{Remark}
\newtheorem{defn}[thm]{Definition}
\newtheorem{fact}[thm]{Fact}
\newtheorem*{fact*}{Fact}
\newcommand{\opcit}{{\it op.cit.\/}\ }
\newcommand{\ie}{{\it i.e.\/}\ }
\def\cE{{\mathcal E}}
\def\A{{\mathbb A}}
\def\B{{\mathbb B}}
\def\C{{\mathbb C}}
\def\M{{{\mathbb M}at}}
\def\Q{{\mathbb Q}}
\def\R{{\mathbb R}}
\def\W{{\mathbb W}}
\def\Z{{\mathbb Z}}
\def\Tr{{\rm Tr}}
\def\tr{{\rm Tr}}
\def\rep{\vartheta}
\def\qd{\,^-\!\!\!\!\!d}
\def\cH{{\mathcal H}}
\def\F{{\mathbb F}}
\def\fourier{\F}
\def\Fa{\F_{e_\R}}
\def\Fm{\F_\mu}
\def\id{{\mbox{Id}}}
\def\cP{{\mathcal P}}
\def\cS{{\mathcal S}}
\def\cE{{\mathcal E}}
\def\Spec{{\rm Spec\,}}
\def\sss{{\mathbb S}}
\def\fourier{\F}
\def\sr0{{\cS^{\rm ev}_0}}
\def\mat{{\rm Mat}}
\def\Se{\frak{ Sets}}
\def\rep{\vartheta}
\def\rmax{{\R_+^{\rm max}}}
\def\Aut{{\rm Aut}}
\def\End{{\rm End}}
\def\GL{{\rm GL}}
\def\Hom{{\rm Hom}}
\def\sar0{{\cS_0(\A_\Q)}}
\def\cF{{\mathcal F}}
\def\cA{{\mathcal A}}
\def\Hom {{\mbox{Hom}}}
\def\nt{\N^{\times}}
\def\N{{\mathbb N}}
\def\fr{{\rm Fr}}
\def\Se{\frak{Sets}}
\def\cC{{\mathcal C}}
\def\cF{{\mathcal F}}
\def\rs{{\R_+^*}}
\def\bars{{\overline\sss}}
 \def\scal2{{\frak S}}
\def\gop{{\Gamma^{\rm op}}}
\def\qqq{\,,\,~\forall}
\begin{document}

\title{BC-system, absolute cyclotomy\\ and the quantized calculus }
\author{Alain Connes}
\email{alain@connes.org}
\address{Coll\`ege de France and IHES}
\author{Caterina Consani}
\email{cconsan1@jhu.edu}
\address{Dept. of Mathematics, Johns Hopkins University}

\dedication{
{\bf Dedicated to Dennis Sullivan on his 80th Birthday}\vspace{.15in}

\hspace*{10cm}\begin{minipage}{0.5\textwidth}
    .. La vraie jeunesse ne s'use pas.\\
On a beau l'appeler souvenir,\\
On a beau dire qu'elle dispara\^it,\\
On a beau dire .. que tout s'en va,\\
Tout ce qui est vrai reste l\` a.\\
(J. Pr\' evert)
\end{minipage}
\vspace{.1in}

\today
}
\classification{11M55 (primary), 11M06, 46L87, 58B34  (secondary)}
\keywords{Semi-local, Trace formula, Scaling, Hamiltonian, Weil positivity, Riemann zeta function, Sonin space }

\begin{abstract}
We give a  short survey on several developments on the  BC-system, the adele class space of the rationals, and on the understanding of the "zeta sector" of the latter space as the Scaling Site. The new result that we present concerns the description of the BC-system as the universal Witt ring (i.e. K-theory of endomorphisms) of the "algebraic closure" of the absolute base $\sss$. In this way we attain a conceptual meaning of the BC dynamical system at the most basic algebraic level.  Furthermore, we define an invariant of Schwartz  kernels in 1 dimension and relate the Fourier transform (in 1 dimension) to its role over the algebraic closure of $\sss$. We implement this invariant  to prove that, when applied to the quantized differential of a function, it provides its Schwarzian derivative. Finally, we  survey the roles of the quantized calculus in relation to Weil's positivity, and that of spectral triples in relation to the zeros of the Riemann zeta function.
\end{abstract}

\maketitle



\section*{Introduction}
We dedicate this paper to Dennis Sullivan,  whose  genuine love for understanding mathematics and his generosity in communicating new ideas has always been an inspirational example to us. We take this opportunity to write an overview on the actual state of our enduring interest in the Riemann zeta function. The Riemann zeta function has many intriguing manifestations in science:  our own interest was triggered  with the discovery of the surprising  relation that this function has with noncommutative geometry.
 In quantum statistical mechanics in the first place, the zeta function appears as the partition function of a dynamical system  determined by the analysis of the Hecke algebra of the affine group of rational numbers \cite{BC}. This result  leads to the noncommutative space of adele classes of the rationals \cite{Co-zeta}. The impact in number theory is the spectral realization of the zeros of the Riemann zeta function  and the geometric understanding of the Riemann-Weil explicit formula   as a trace formula \cite{Co-zeta, Meyer}. Moreover, the quantized calculus in noncommutative geometry, jointly with the theory of prolate spheroidal functions in analysis  provides, by means of a semilocal trace formula computation, a conceptual reason for the positivity of  Weil's functional \cite{quasiinner}. The understanding of the adele class space of the rationals from a more classical geometric standpoint is provided by  the theory of Grothendieck toposes: indeed, in many cases the space of the points of these toposes is noncommutative. The ``zeta sector'' of the adele class space of the rationals is described precisely by the set of points of the Scaling Site \cite{CCscal1}. This  result led us  to a parallel and independent investigation of  the algebraic landscape of semirings of characteristic one, where each integer acts by an endomorphism, thus generalizing the Frobenius operator on geometries in finite characteristic, and where categorically, one only adds one more  (prime) ``field'':  the Boolean semifield $\mathbb B$ \cite{CC4}. However, and in spite of its elementary definition, the Boolean   $\mathbb B$ cannot qualify as realizing  the original dream of J. Tits in his search for a basic algebraic structure rooting fundamental examples of combinatorial geometries, neither as an algebraic incarnation of Waldhausen ``initial ring''. The weakness of the classical algebraic approach is in part due to the inherent deficiency of set-theory when compared to the more flexible categorical counterpart. By following the simple idea that an abelian group $A$ is entirely encoded by the  covariant functor $HA$ that assigns   to a pointed set $X$  the pointed set of $A$-valued ``divisors'' on $X$, one quickly realizes that the functorial viewpoint is a very natural and versatile generalization of the original set-theoretical notion of abelian group. This idea has  led us to develop algebraic geometry over a ``base'' $\sss$ that is the spherical counterpart of the multiplicative monoid $\F_1=\{0,1\}$ and the categorical backbone of the sphere spectrum in homotopy theory \cite{DGM}. Rings over this base are the $\Gamma$-rings of G. Segal and the simplest of them is  $\sss$, namely the identity endofunctor on the category of pointed sets.  Ordinary  rings become $\Gamma$-rings through the Eilenberg-MacLane functor $H$.   In \cite{CCprel} we gave an  arithmetic application of these ideas by extending, at archimedean infinity, the structure sheaf of the algebraic spectrum of the ring of integers    as a sheaf of $\Gamma$-rings, and more precisely as a subsheaf of the constant sheaf $H\Q$. One instance of the relevance of the stalk at the  archimedean place of this compactification (the  Arakelov one point compactification) is 
exemplified through its relation with the Gromov norm \cite{CCgromov}. The choice of this new base  has also the further advantage to provide the right framework for Hochschild and cyclic homologies, since simplicial $\Gamma$-sets (\ie $\Gamma$-spaces) are well understood and  homotopy theory over them becomes, by means of the Dold-Kan correspondence,   homological algebra  taking place over $\sss$ \cite{CCAtiyah}.  \newline 
In Section \ref{bcrole} we give a  short survey of all these developments centered on the key role played by the  BC-system and  the adele class space, and on the understanding of the ``zeta sector'' of the latter space as the Scaling Site. The new result that we present in this context is introduced in Section \ref{bcbarS}, where we describe the BC-system as the Witt ring (\ie K-theory of endomorphisms) of the ``algebraic closure'' of $\sss$. In this way we obtain a conceptual meaning of the BC dynamical system no longer from analysis (in terms of quantum statistical mechanics) but at the most basic algebraic level. The algebraic closure   $\overline{\sss}$ is defined  by adjoining to $\sss$ all (abstract) roots of unity, and the relation between its  Witt ring and the BC-system suggests to perform the following  two steps
\begin{enumerate}
\item Determine the extension of scalars $\overline{\Spec \Z }\times_\sss \overline{\sss}$.
\item Define an appropriate   De Rham-Witt complex for  $\overline{\Spec \Z }\times_\sss \overline{\sss}$.
\end{enumerate}
An educated guess on  $\overline{\Spec \Z }\times_\sss \overline{\sss}$  suggests  that this space ought to involve algebraically the cyclotomic extension  of the field of rational numbers. 
The De Rham-Witt complex of  $\overline{\Spec \Z }\times_\sss \overline{\sss}$ should mainly provide a  strengthening of the link between two worlds:  on one side (say on the left) the classical world of Arakelov geometry now enriched over $\Gamma$-rings,  while on the other side (the right) the analytic framework of noncommutative geometry stemming from the BC-system and directly related to the understanding of the zeros of the Riemann zeta function. These two worlds are, a priori, quite different in nature. Homological algebra  over $\Gamma$-rings is, through the Dold-Kan correspondence, naturally encoded by the homotopy theory of $\Gamma$-spaces, so that the world on the left  is that of homotopy theory, spectra, animas.... The world on the right side instead, is that of analysis, Hilbert space operators, the quantum...
One fundamental relation between these two worlds is the assembly map \cite{BCH} which associates to $K$-homology classes of the universal proper quotient,  classes in the $K$-theory of the reduced $C^*$-algebra of a group.  This  creates a bridge, of index theoretic nature, between the world of homotopy theory and the world of analysis where $K$-theory of $C^*$-algebras plays a key role.  More precisely,  the assembly map relates together two ways of effecting the quotient of a space by a group action. On the left world one sees an homotopy quotient as a special case of a homotopy colimit, while on the right world one effects a cross product which is a special case of a general principle in noncommutative geometry of encoding difficult quotients (such as leaf spaces of foliations) by noncommutative algebras. It is also worth noticing that aside from quotient spaces,  these tricky spaces also appear naturally as sets of points of a topos. For example,  to a small category $\cC$  one may associate the presheaf topos $\widehat \cC$ of contravariant functors from $\cC$ to the category of sets. In general, the nature of the space of points of the topos $\widehat \cC$ is as delicate  as that of a quotient space, and one may either use as a substitute the classifying space $B\cC$  in the left world or view such spaces as noncommutative spaces, if one prefers to work in the right world.\newline
In Section \ref{sectanalysis} we  describe the role of the 1-dimensional quantized calculus in relation to Weil's positivity. A central role is played both by the unitary obtained by composing Fourier transform with inversion, and by its  quantized logarithmic derivative. An elementary lemma only meaningful in  quantized calculus (called the Main Lemma in this paper)  gives the conceptual reason to expect  Weil's positivity. The fact that the hypothesis of this lemma is only verified up to an infinitesimal prevents one to conclude immediately that positivity holds. In \cite{weilpos} we showed that positivity can still be obtained, for a single archimedean place, by treating separately this infinitesimal. Moreover,  in the semilocal case, (i.e. when  finitely many places, including the archimedean one, are involved), the same  infinitesimal property  continues to hold \cite{quasiinner}, and this fact opens the way for a strategy toward RH. In  \S \ref{Schw&S} we define an invariant of Schwartz  kernels in 1 dimension and relate the Fourier transform (in 1 dimension) to its role over $\bars$ (see \S \ref{sec:2.3}). Then, we implement this invariant  to prove that, when applied to the quantized differential of a function, it delivers its Schwarzian derivative. This shows in particular that, as emphasized in the fax of D. Sullivan reported in Figure \ref{fax} (beginning of Section \ref{sectanalysis}), the quantized differential calculus encodes in a subtle manner the conformal structure also in dimension 1, where the Riemannian point of view gives no clue. In \S \ref{sectmainlem} we state the Main Lemma in quantized calculus that yields Weil's positivity as a consequence of the triangular property of the quantized differential, and in \S \ref{semiloc} we discuss the triptych formed by Fourier, Zeta and Poisson. The quantized calculus is then  applied  in the semilocal framework (\S \ref{sectsemiloc}) and  provides, through the semilocal trace formula, both the operator theoretic formalism for the explicit formulas of Riemann-Weil and a conceptual reason for Weil's positivity. We discuss the radical of Weil's quadratic form  in \S \ref{radical} and the ``almost radical" of its restriction to an interval $[\lambda^{-1},\lambda]$   in \S \ref{spectrip}. We then use spectral triples (through Dirac operators)   to detect the zeros of the Riemann zeta function up to imaginary part $2\pi \lambda^2$. This provides the operator theoretic replacement for the Riemann-Siegel  formula in analytic number theory and the approximation to the sought for cohomology discovered in \cite{ccspectral}.

\section{The BC-system and  its role}\label{bcrole}

The  origin of the relation between noncommutative geometry and the Riemann zeta function is  a fundamental interplay between the mechanism of symmetry breaking in physics and the theory of ambiguity of E. Galois. In physics, the choice of an extremal  equilibrium state at zero temperature  breaks the symmetry of a system. On the Galois side the choice of such a state selects a group isomorphism of the abstract group $\Q/\Z$ with the group of roots of unity in $\C$. The link is established explicitly by implementing the formalism of quantum statistical mechanics \cite{BR} that encodes a quantum statistical system by a pair $(\cA,\sigma_t)$  of a $C^*$-algebra $\cA$ and a 1-parameter group of
automorphisms $\sigma:\R \to \Aut(\cA)$. The main tool  is the KMS condition that analytically encapsulates the relation existing in quantum mechanics between the Heisenberg time evolution of observables $\sigma_t(A)=\exp(itH)A \exp(-itH)$ ($H$ is the Hamiltonian of the system) and an equilibrium state $\phi$ at inverse temperature $\beta=\frac{1}{kT}$,  whose evaluation on an observable $A$ is  $\phi(A):=Z^{-1} \Tr(A \exp(-\beta H)$, where $Z=\Tr(\exp(-\beta H)$.  The precise mathematical encoding of this relation was obtained by Haag, Hugenholtz and Winnink \cite{HHW}, starting from earlier work of Kubo, Martin and Schwinger. A  way to understand the KMS$_\beta$ condition is provided by the equality
 $$
\left(\varphi(x\sigma_t(y))\right)_{t=i\beta} =\varphi(yx)
$$ 
whose heuristic meaning is that $\sigma_t$ at ${t=i\beta}$ compensates for the lack of tracial property of the state $\varphi$ by allowing one to replace $\varphi(yx)$ with $\varphi(x\sigma_t(y))$ at ${t=i\beta}$. The states fulfilling the KMS$_\beta$ condition form a (possibly empty) convex compact simplex. \newline
The specific system that exhibits the interplay  between the phenomenon of symmetry breaking in physics and the theory of ambiguity of E. Galois is the BC-system \cite{BC}. It is defined using the affine group   \[
P^+(\Q):=\Bigg\{\left(
\begin{array}{cc}
 1 & b \\
 0 & a \\
\end{array}
\right)\mid a,b\in \Q, a>0\Bigg\}.\]
The subgroup $P^+(\Z)$ of integral translations  obtained by requiring that $a,b\in \Z$ is  almost normal in $P^+(\Q)$, and this fact allows one to define a Hecke algebra $\cA$ in place of the convolution algebra of the quotient $P^+(\Q)/P^+(\Z)$. The action of $\cA$ in the Hilbert space $\ell^2(P^+(\Q)/P^+(\Z))$ plays the role of the regular representation. The significant fact here is that this representation determines a factor of type III, thus naturally endowed with a  one parameter group of automorphisms $\sigma_t$ of $\cA$   (the time evolution). The pair $(\cA,\sigma_t)$ constitutes the BC-system. Its first properties are the following:

\noindent $\blacktriangleright$~The system exhibits a phase transition with spontaneous symmetry breaking. The KMS$_\beta$ state is unique for $\beta\leq 1$. For $\beta> 1$ the
extremal KMS$_\beta$ states are parameterized by the points of  the zero-dimensional Shimura variety $Sh(\GL_1,\{\pm 1\})$.\newline
\noindent $\blacktriangleright$~The symmetries of the system are  given by the group
$\GL_1(\hat\Z)=\hat\Z^*$ of invertible elements of the profinite completion of the integers. The zero-temperature KMS states  evaluated on a natural arithmetic subalgebra of the
algebra of observables of the system take values that are algebraic
numbers and generate the maximal abelian extension $\Q^{\rm    cycl}$ of $\Q$.\newline
\noindent $\blacktriangleright$~The class field
theory isomorphism intertwines the action of the
symmetries and the Galois action on the values
of states, thus  providing a quantum
statistical mechanical reinterpretation of the explicit class field
theory of $\Q$.\newline
\noindent $\blacktriangleright$~The partition
function $Z(\beta)$ of the system is the Riemann zeta function evaluated at $\beta\in\R$.

\noindent This last property  establishes the link between noncommutative geometry and the Riemann zeta function. The algebra of the BC-system describes the quotient space $\Q^{\times}\backslash \A_f$ of finite adeles of $\Q$ acted upon by   the multiplicative group $\Q^{\times}$. When passing to the dual system using the dynamics, and combining the dual action of $\rs$ together with the symmetries $\GL_1(\hat\Z)=\hat\Z^*$ of the system,  one obtains the action of the idele class group on the adele class space $\Q^{\times}\backslash \A_\Q$.  This is the space that provides a geometric interpretation of the Riemann-Weil explicit formulas \cite{Co-zeta}.\newline
This latter result  was the starting point of a ``longue marche''  pursuing the study of the geometry of the adele class space. This space provides the spectral realization of the zeros of $L$-functions with Gr\"ossencharacter, where the Riemann zeta function is associated to  the trivial character and whose related space is the    ``zeta-sector'' $X=\Q^{\times}\backslash \A_\Q/\hat\Z^*$.   This zeta-sector   provides a Hasse-Weil formula for the Riemann zeta function using the action of $\rs$ on $X$ \cite{CC1,CC2}. In view of this result  it is clear that  $X$ may play the role of  the space of the points of the curve for function fields. The geometric structure of $X$ came with the discovery    of the ``Arithmetic Site" \cite{CCarith,CCas}: this is the presheaf topos $\widehat\nt$ dual to the multiplicative monoid of positive integers,  endowed with the structure sheaf provided by the only semifield $F$ whose multiplicative group is infinite cyclic. The geometry of the Arithmetic Site is tropical and  of characteristic one (the addition is unipotent:  $1+1=1$). The structure sheaf of  this topos is obtained by implementing the action  of the semigroup $\nt$ on the semifield $F$ by power maps $x\mapsto x^n$. It is a general fact that in characteristic one  the power maps define injective  endomorphisms of a semifield and that there exists only one semifield which is finite and  not a field, namely the Boolean semifield $\B:=\{0,1\}$.  The Arithmetic Site is defined over $\B$ (because $F$ is of characteristic one) and a key result is that the  ``zeta-sector" $X$ gets canonically identified with the set of points of the Arithmetic Site defined over the semifield $\rmax$ of tropical real numbers. This semifield appears both in tropical geometry and also  in semiclassical analysis as a limit of deformations of real numbers. One extremely convincing result of the dequantization program \cite{litvinov} is that the Fourier transform becomes the Legendre transform when taken to the classical limit. The semifield $\rmax$ is an infinite extension of $\B$ and its absolute Galois group  is determined by the power maps
\begin{equation}\label{galrmax}
\Aut_\B(\rmax)=\{\fr_\lambda\mid \lambda\in \rs\}	, \qquad \fr_\lambda(x):=x^\lambda.
\end{equation}
This group acts on the points of the Arithmetic Site defined over $\rmax$ and, under the canonical identification of these points with the ``zeta-sector" $X=\Q^{\times}\backslash \A_\Q/\hat\Z^*$, this action corresponds to the action of the idele class group. In spite of the fact that the Arithmetic Site is an object of countable nature (the semigroup $\nt$ and the semifield $F$ are countable) and hence there is no non-trivial action of $\rs$ on the topos,  $\rs$ acts meaningfully using the theory of  correspondences \cite{CCarith, CCas}.  
The extension of scalars of the Arithmetic Site  to $\rmax$  determines
the Scaling Site \cite{CCscal1}, namely the Grothendieck topos $[0,\infty)\rtimes \nt$ ($\nt$ acts by multiplication) endowed with the structure sheaf of continuous convex functions with integral slopes.   The set of points of the topos $[0,\infty)\rtimes \nt$ identifies canonically with the ``zeta sector'' $X$. The restriction of the  structure sheaf of the Scaling Site to the periodic orbits in $X$ determines,  for each prime $p$, the quotient $\rs/p^\Z$ which appears in $X$ as the counterpart of the prime-point $p$ of $\Spec \Z$. The emerging tropical structure describes an analogue of an elliptic curve and it also exhibits a few totally new features. For instance, the divisor degree on these curves is  a real number and the Riemann-Roch formula is real valued. Such real valued indices are ubiquitous in  the noncommutative geometry of foliations and the tropical geometry of the Scaling Site can be lifted in complex geometry \cite{CCscal4}. \newline 
In order to extend the geometric positivity argument used by Mattuck-Tate and Grothendieck for function fields, to the field of rational numbers and on the above geometric space one  needs to show a Riemann-Roch formula holding on the square of the Scaling Site. In this respect, the case of periodic orbits is  far too simplified since for curves one can bypass the construction of  a cohomology theory for divisors beyond $H^0$  using Serre duality as a definition of $H^1$. For surfaces, and in particular for the square of the Scaling Site, this trick handles only $H^2$ leaving  $H^1$  still out of reach. One is thus faced with the problem of developing a good cohomology theory in characteristic  one. Motivated by this application we developed a general theory of homological algebra for the (non abelian) category of $\mathbb B$-modules  \cite{CCscal3}, however the lack of the additive inverse makes the elimination of certain technical difficulties apparently quite hard. While trying to by-pass this issue,  we were lead to investigate a more fundamental base for algebraic manipulations, which is, as explained in the introduction, independent of the choice of a characteristic. The main reason for our turn of interests toward this new base is that it is the most natural one for Hochschild and cyclic homology theories.   
In the next section we show that the fundamental basis $\sss$ provides the conceptual interpretation of the BC-system as the Witt construction over the algebraic closure of $\sss$.

\section{The conceptual meaning of the BC-system}\label{bcbarS}

 The convolution algebra of the quotient $P^+(\Q)/P^+(\Z)$ has an integral model \cite{CCarbc} \S 3,  given by the Hecke algebra ${\mathcal H}_\Z=\Z[\Q/\Z]\rtimes\N$. The ring endomorphisms $\sigma_n(e(r))=e(nr)$, $n\in\N$ act on the canonical generators of the group ring $e(r)\in \Z[\Q/\Z]$,  $r\in \Q/\Z$. There are  natural quasi-inverse linear maps
  \begin{equation*}
\tilde\rho_n: \Z[\Q/\Z] \to \Z[\Q/\Z]\,, \qquad \tilde\rho_n(e(\gamma))=
\sum_{n\gamma'=\gamma}e(\gamma').
\end{equation*}
These two operators are used both in the definition of the crossed product $\Z[\Q/\Z]\rtimes\N$ and in the presentation of the algebra.\newline
There is a striking analogy between  the algebraic rules fulfilled by the pair  $\{\sigma_n, \tilde\rho_n\}$ and the relations fulfilled, in the global Witt construction, by the Frobenius and  Verschiebung operators. The invariant part
 of the group ring $\Z[\Q/\Z]$ for the action of the  group  $\Aut(\Q/\Z)= \widehat \Z^*$ is described in terms of  Almkvist's ring of endomorphisms $\W_0(\sss)$ as follows

\begin{thm}(\cite{CCAtiyah} Theorem 2.3)
\label{w0s} The ring $\W_0(\sss)$ is canonically isomorphic to the invariant part
 of the group ring $\Z[\Q/\Z]$ for the action of the  group  $\Aut(\Q/\Z)= \widehat \Z^*$. 
\end{thm}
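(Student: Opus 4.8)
The plan is to compute both sides in terms of explicit $\Z$-bases and check that the resulting structure constants agree. Recall that $\W_0(\sss)$, Almkvist's $K$-theory of endomorphisms over $\sss$, is the Grothendieck group of the (exact) category of pairs $(X,f)$ with $X$ a finitely generated projective $\sss$-module — a finite pointed set — and $f\colon X\to X$ a pointed self-map, modulo the classes of the zero endomorphisms, with product induced by the smash product and unit $C_1:=(\{*\}\cup\Z/1\Z,\mathrm{id})$. First I would reduce an arbitrary class to its ``cyclic part'': the eventual image $X_\infty=\bigcap_n f^n(X)$ is an $f$-stable pointed subset on which $f$ acts as a basepoint-fixing permutation, hence as a disjoint union of cyclic permutations, while the induced endomorphism on $X/X_\infty$ is nilpotent and so vanishes in $\W_0(\sss)$ (filter by $\ker\bar f^{\,i}$; each successive subquotient carries the zero endomorphism). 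Thus $\W_0(\sss)$ is generated by the classes $[C_n]$, $n\ge 1$, where $C_n=\{*\}\cup\Z/n\Z$ with $f$ translation by $1$ on $\Z/n\Z$. A short computation of the smash product — the diagonal translation of $\Z/m\Z\times\Z/n\Z$ splits into $\gcd(m,n)$ cycles of length $\mathrm{lcm}(m,n)$ — gives
\[
[C_m]\cdot[C_n]=\gcd(m,n)\,[C_{\mathrm{lcm}(m,n)}].
\]
That the $[C_n]$ are $\Z$-linearly independent, hence a basis, I would obtain from the zeta map $(X,f)\mapsto\exp\!\big(\sum_{k\ge 1}\#\{x\neq *:f^k(x)=x\}\,t^k/k\big)\in 1+t\Z[[t]]$: it is additive on short exact sequences and kills zero endomorphisms, so it descends to $\W_0(\sss)$, it sends $[C_n]$ to $(1-t^n)^{-1}$, and the $(1-t^n)^{-1}$ are visibly multiplicatively independent (compare lowest-degree terms after applying $\log$). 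Hence $\W_0(\sss)=\bigoplus_{n\ge1}\Z\,[C_n]$ with the product displayed above.

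On the group-ring side, the orbits of $\Aut(\Q/\Z)=\widehat\Z^*$ on $\Q/\Z$ are exactly the sets of elements of each fixed finite order, so the orbit sums $\pi_n=\sum_{\mathrm{ord}(r)=n}e(r)$ form a $\Z$-basis of $\Z[\Q/\Z]^{\widehat\Z^*}$. The decisive move — which I expect to be the real content of the identification — is to pass instead to the basis $\rho_n:=\sum_{nr=0}e(r)=\sum_{d\mid n}\pi_d$: by Möbius inversion over the divisibility poset $\{\rho_n\}_{n\ge 1}$ is again a $\Z$-basis, one has $\rho_1=1$, and since the addition map $\tfrac1m\Z/\Z\times\tfrac1n\Z/\Z\to\tfrac1{\mathrm{lcm}(m,n)}\Z/\Z$ is a surjective homomorphism all of whose fibres have cardinality $\gcd(m,n)$, one gets $\rho_m\,\rho_n=\gcd(m,n)\,\rho_{\mathrm{lcm}(m,n)}$. (In the $\pi_n$ basis the structure constants look quite different, e.g. $\pi_p^2=(p-1)\pi_1+(p-2)\pi_p$, which is precisely why the change of basis is needed.)

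Comparing the two computations, the $\Z$-linear isomorphism determined by $[C_n]\mapsto\rho_n$ carries $1$ to $1$ and matches all structure constants, hence is a ring isomorphism $\W_0(\sss)\xrightarrow{\sim}\Z[\Q/\Z]^{\widehat\Z^*}$. To present it as the \emph{canonical} one I would further check that it intertwines the Witt-type operators on the two sides: the Frobenius $F_k$ ($[X,f]\mapsto[X,f^k]$) and Verschiebung $V_k$ on $\W_0(\sss)$ with the operators $\sigma_k(e(r))=e(kr)$ and $\tilde\rho_k(e(\gamma))=\sum_{k\gamma'=\gamma}e(\gamma')$ of the integral Hecke algebra; the matching is forced on the canonical generators $[C_n]$ and $\rho_n$ (both $F_k[C_n]=\gcd(k,n)[C_{n/\gcd(k,n)}]$, $V_k[C_n]=[C_{kn}]$ and the analogous identities $\sigma_k\rho_n=\gcd(k,n)\rho_{n/\gcd(k,n)}$, $\tilde\rho_k\rho_n=\rho_{kn}$ being immediate from cycle, resp. group-law, combinatorics).

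The main obstacle lies less in any single computation than in making $\W_0(\sss)$ precise — fixing the notion of finitely generated projective $\sss$-module and of admissible short exact sequence in the $\Gamma$-set framework, and checking that $K_0$ of the resulting category is the group completion of isomorphism classes modulo the exactness relations — after which the reduction to cyclic parts and the bookkeeping relating $[C_n]$ to $\rho_n$ is routine.
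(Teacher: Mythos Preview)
The paper does not prove this statement: it is quoted from \cite{CCAtiyah} and serves as the starting point for the extension to $\bars$ carried out in Theorem~\ref{w0s1}. Your argument is correct, and it is essentially the same strategy the paper deploys for that extension, only phrased in a more hands-on way. Your reduction to the eventual image $X_\infty$ is exactly the stabilisation of the ranges $X_\ell=\phi^\ell(n_+)$ used in the proof of Theorem~\ref{w0s1}; your ``zeta map'' $(X,f)\mapsto\exp\bigl(\sum_k\#\mathrm{Fix}(f^k)\,t^k/k\bigr)=\det(1-tf)^{-1}$ is Almkvist's invariant, which is precisely what Proposition~\ref{defninv} packages as the universal invariant $\tau$. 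Where you match structure constants directly via $[C_m][C_n]=\gcd(m,n)[C_{\mathrm{lcm}(m,n)}]$ and $\rho_m\rho_n=\gcd(m,n)\rho_{\mathrm{lcm}(m,n)}$, the paper instead diagonalises each cyclic block by the Fourier matrix (Proposition~\ref{fourier}), reading off the divisor of eigenvalues of $C_n$ as the full set of $n$-th roots of unity---whose formal sum in $\Z[\Q/\Z]$ is exactly your $\rho_n$. So the two routes produce the same map $[C_n]\mapsto\rho_n$; your explicit verification of the Frobenius/Verschiebung compatibilities is the content of \eqref{2sigma} and \eqref{2rho}.
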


In this section we extend Theorem \ref{w0s} by showing a natural isomorphism of the group ring $\Z[\Q/\Z]$ with the ring $\W_0(\overline\sss)$, where  $\overline\sss$ denotes the monoid $\sss$-algebra
$\sss[M]$ of the  multiplicative pointed monoid $M=(\Q/\Z)_+$, with elements the base point $*=0$ and  the $e(r)$'s for $r\in \Q/\Z$. The multiplication in $M$ is defined by: $e(r) e(s)=e(r+s)$
$\forall~r,s\in \Q/\Z$. The  functor $\overline\sss: \gop \longrightarrow \Se_*$,  is defined by $\overline\sss[X]=X\wedge M$, where the monoid structure in $M$ yields the algebra structure $\overline\sss[X]\wedge \overline\sss[Y]\to \overline\sss[X\wedge Y]$.   

\subsection{Endomorphisms and matrices}\label{sec2.1}
In \cite{CCAtiyah} we considered the class of $\sss$-modules of the form $\sss[F]=\sss\wedge F$, where $F$ is   a finite object of the category $\Se_*$ of  pointed sets. As a  functor $\sss[F]:\gop \longrightarrow \Se_*$   associates to a finite pointed set $X$ the smash product $\sss[F](X):= F\wedge X$ and to a  map of finite pointed sets $g:X\to Y$ the map $\sss[F](g):=\id \wedge g$. An endomorphism  of $\sss[F]$ is a natural transformation.

\begin{lem}\label{endosm} Let $F,F'$ be two finite objects in  $\Se_*$. The map
\[
\Hom_\sss(\sss[F],\sss[F']) \to \Hom_{\Se_*}( F, F') \quad \phi\mapsto \phi(1_+),
\]
where $\phi(1_+)$ denotes the restriction of $\phi$ to $1_+=\{0, 1\}$ is a bijection of sets. The inverse map is  \[\Hom_{\Se_*}( F, F')\to \Hom_\sss(\sss[F],\sss[F']), \quad  \psi\mapsto \tilde \psi=\id\wedge \psi,\] where   $\tilde \psi(X)= \id_X\wedge \psi: X\wedge F\to X\wedge F'$.
\end{lem}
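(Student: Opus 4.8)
The plan is to check directly that the two displayed maps are mutually inverse; all the content lies in the fact that an $\sss$-linear map between free modules of the form $\sss[F]$ is rigidly determined by its value on $1_+$, which is a pure naturality statement. First I would verify that $\psi\mapsto\tilde\psi=\id\wedge\psi$ really lands in $\Hom_\sss(\sss[F],\sss[F'])$: for any morphism $g\colon X\to Y$ of finite pointed sets the two legs of the naturality square, $\sss[F'](g)\circ\tilde\psi(X)$ and $\tilde\psi(Y)\circ\sss[F](g)$, both equal $\psi\wedge g$ by bifunctoriality of the smash product, so $\tilde\psi$ is automatically natural. Here one uses the symmetry isomorphism $X\wedge F\cong F\wedge X$ to regard $\id_X\wedge\psi$ as a natural transformation $\sss[F]\Rightarrow\sss[F']$ (recall $\sss[F](X)=F\wedge X$); keeping track of this swap is essentially the only bit of bookkeeping in the whole proof.

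Next, one composite is immediate: restricting $\tilde\psi$ to $1_+=\{0,1\}$ and using the canonical identification $F\wedge 1_+=F$ recovers $\psi$, so $\psi\mapsto\tilde\psi\mapsto\psi$. For the reverse composite, take $\phi\in\Hom_\sss(\sss[F],\sss[F'])$ and set $\psi:=\phi(1_+)\colon F\to F'$ (viewed through $F\wedge 1_+=F$); the claim is $\tilde\psi=\phi$, i.e. $\phi(X)=\psi\wedge\id_X$ on every $\sss[F](X)=F\wedge X$. For a non-basepoint $x\in X$, the pointed map $j_x\colon 1_+\to X$ with $j_x(1)=x$ is a morphism of $\gop$ and satisfies $\sss[F](j_x)=\id_F\wedge j_x$; applying naturality of $\phi$ along $j_x$ to an element $f\wedge 1$ with $f\in F$ gives
\[
\phi(X)(f\wedge x)=(\id_{F'}\wedge j_x)\bigl(\phi(1_+)(f\wedge 1)\bigr)=(\id_{F'}\wedge j_x)\bigl(\psi(f)\wedge 1\bigr)=\psi(f)\wedge x .
\]
Since every non-basepoint element of $F\wedge X$ has the form $f\wedge x$, this yields $\phi(X)=\psi\wedge\id_X$ for all $X$, hence $\widetilde{\phi(1_+)}=\phi$. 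Together with the previous paragraph this exhibits the two maps as inverse bijections, proving the lemma.

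As for the main obstacle: there is none of real depth. The lemma is a Yoneda-type rigidity statement --- $\sss[F]$ is the free $\sss$-module on the pointed set $F$ placed in ``degree'' $1_+$ --- and one even sees in passing that finiteness of $F$ and $F'$ is never used. The only point requiring any care is the coherent handling of the symmetry of $\wedge$ noted above, so that the formula $\tilde\psi(X)=\id_X\wedge\psi$ of the statement and the component identity $\phi(X)=\psi\wedge\id_X$ used in the argument refer to one and the same natural transformation.
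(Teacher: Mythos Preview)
Your proof is correct and follows essentially the same approach as the paper's: both arguments hinge on the pointed map $j_x\colon 1_+\to X$ (called $g$ in the paper) and naturality of $\phi$ along it to show that $\phi(X)$ is determined by $\phi(1_+)$. Your write-up is in fact more careful than the paper's in explicitly verifying both composites and in flagging the symmetry isomorphism $F\wedge X\cong X\wedge F$ needed to reconcile the two conventions; your side remark that finiteness is never used is also correct.
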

\proof Let $\phi\in \Hom_\sss(\sss[F],\sss[F'])$ and  $X$ a finite pointed set. An element $ y\in \sss[F](X)=F\wedge X$, $y\neq *$, is determined by a pair $y=(f,x)\in F\times X$, and there exists a (unique) map of pointed sets $g:1_+\to X$  with $g(1)=x$. By the naturality of the transformation $\phi$ one has:   $\phi\circ \sss[F](g)=\sss[F](g)\circ \phi$. This shows that $\phi$ is uniquely determined by its restriction $\phi(1_+)$ on $\sss[F](1_+)=F$,  with $\phi(1_+)\in \Hom_{\Se_*}( F, F')$. Conversely, given $\psi\in \Hom_{\Se_*}( F, F')$ one associates to it the natural transformation $\tilde \psi: \sss[F] \to \sss[F']$  that maps a finite pointed set $X$ to the map $\id_X\wedge \psi: \sss[F]\to \sss[F']$. It is immediate to verify that the two  maps   are inverse of each other. \endproof 

In the following part we shall consider endomorphisms of $\bars$-modules of the form $\bars[F]=\bars\wedge F$, with $F$ a finite pointed set. For $n\in \N$, $n_+:=\{0,1,\ldots, n-1,n\}$.

\begin{defn} Let $\mat_n^R(\bars)$ be the multiplicative pointed monoid of $n\times n$ matrices  with entries  in the multiplicative monoid $\bars(1_+)=M=(\Q/\Z)_+$, which have  only one non-zero (\ie not equal to the base point $*$) entry in each column. 
\end{defn}

Given $\mu=(\mu_{ij})\in \mat_n^R(\bars)$ one defines a map of pointed sets  by setting
\begin{equation}\label{endm}
	\rho(\mu):M\wedge n_+\to M\wedge n_+ \quad \rho(\mu)(\alpha,j):=\begin{cases}* & {\rm if}~ \mu_{ij}=*\quad \forall i \\ (\mu_{ij}\alpha,i)& {\rm if}~ \mu_{ij}\neq*.\end{cases}
\end{equation}
 Note that for $\mu\in \mat_n^R(\bars)$, there exists, for a given $j$, at most one $i\in \{1,\ldots, n\}$ with $\mu_{ij}\neq*$.
 
\begin{prop}\label{endosm1}  With the notations of Lemma \ref{endosm}, the map 
\[
\tilde\rho:\mat_n^R(\bars)\to \End_\bars(\bars[n_+])\quad \tilde\rho(\mu):=\widetilde{\rho(\mu)}
\]
is an isomorphism of multiplicative pointed monoids.	
\end{prop}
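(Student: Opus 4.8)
The plan is to split the statement into two independent pieces: a $\bars$-linear refinement of Lemma \ref{endosm}, and a completely explicit description of the $M$-equivariant pointed self-maps of $M\wedge n_+$. Throughout I use that $\bars=\sss[M]$ is the monoid $\sss$-algebra of $M=\bars(1_+)=(\Q/\Z)_+$, so that the underlying $\sss$-module of $\bars[n_+]=\bars\wedge n_+$ is the functor $X\mapsto X\wedge M\wedge n_+$ and the $\bars$-action is multiplication in the $M$-factor. The refinement I would prove first is: for any finite pointed set $F$, the assignment $\Phi\mapsto\Phi(1_+)$ is a bijection of multiplicative monoids
\[\End_\bars(\bars\wedge F)\;\xrightarrow{\ \sim\ }\;\End_M(M\wedge F),\]
where $\End_M(M\wedge F)$ denotes the pointed self-maps of $M\wedge F$ that are equivariant for the $M$-action by multiplication in the $M$-factor, with inverse $\psi\mapsto\widetilde\psi=\id\wedge\psi$. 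The naturality argument in the proof of Lemma \ref{endosm} already identifies the natural transformations of the underlying $\sss$-modules with $\Hom_{\Se_*}(M\wedge F,M\wedge F)$ and is compatible with composition (this part uses no finiteness of $M\wedge F$); the only new input is that such a natural transformation $\Phi=\id\wedge\psi$ is $\bars$-linear \emph{iff} $\psi$ is $M$-equivariant. Unwinding the module-structure map $\bars(1_+)\wedge N(1_+)\to N(1_+)$ for $N=\bars\wedge F$ — which is just multiplication in the $M$-factor — the compatibility square for $\Phi$ reads exactly $\psi(mm',f)=m\cdot\psi(m',f)$ for all $m,m'\in M$, $f\in F$, i.e. $M$-equivariance of $\psi$.

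It then remains to identify $\End_M(M\wedge n_+)$ with $\mat_n^R(\bars)$ via $\rho$ and to match the two monoid structures. Well-definedness of $\rho(\mu)$ in \eqref{endm} is precisely the content of the ``at most one non-zero entry per column'' condition, and $\rho(\mu)$ is pointed because $*$ is absorbing in $M$; the $M$-equivariance $\rho(\mu)(m\alpha,j)=(\mu_{ij}m\alpha,i)=m\cdot(\mu_{ij}\alpha,i)$ is immediate from commutativity of $M$. That $\rho$ is a homomorphism of multiplicative pointed monoids amounts to checking that $\mat_n^R(\bars)$ is closed under matrix multiplication — the column condition makes the formal sum $\sum_k\mu_{ik}\mu'_{kj}$ collapse to a single term, lying again in $M$ — and that $\rho(\mu\mu')=\rho(\mu)\circ\rho(\mu')$, with $\rho(\id_n)=\id$ and $\rho$ of the all-$*$ matrix the zero map. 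For bijectivity: an $M$-equivariant pointed $\psi\colon M\wedge n_+\to M\wedge n_+$ is determined by its $n$ values $\psi(e(0),j)\in M\wedge n_+$, each either $*$ or of the form $(e(s_j),i_j)$; setting $\mu_{i_j,j}=e(s_j)$ and all other entries equal to $*$ gives a matrix in $\mat_n^R(\bars)$ with $\rho(\mu)=\psi$, and distinct matrices give distinct maps because $\rho(\mu)(e(0),j)$ reads off the $j$-th column of $\mu$. Composing the two monoid isomorphisms, $\tilde\rho(\mu)=\widetilde{\rho(\mu)}$ is the asserted isomorphism.

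The genuinely delicate step is the $\bars$-linear refinement of Lemma \ref{endosm}, namely the verification that $\bars$-linearity of a natural transformation of the underlying $\sss$-modules is equivalent to $M$-equivariance of its value at $1_+$; once this dictionary is in place, everything else is routine monoid bookkeeping with the matrix calculus. A secondary point to be careful about is that $M=(\Q/\Z)_+$, hence $M\wedge F$, is infinite, so one should appeal to the Yoneda-type \emph{argument} in the proof of Lemma \ref{endosm} — which needs no finiteness of $F$ — rather than to its literal statement. Equivalently, one can organize the whole proof around the decomposition $\bars[n_+]=\bigvee_{j=1}^{n}\bars[1_+]$ into a wedge of free rank-one $\bars$-modules, together with $\End_\bars(\bars[1_+])=\bars(1_+)=M$ and the universal property of the coproduct; this makes the appearance of matrices with one non-zero entry per column automatic.
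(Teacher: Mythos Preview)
Your proof is correct and follows essentially the same route as the paper: reduce $\End_\bars(\bars[n_+])$ to $M$-equivariant pointed self-maps of $M\wedge n_+$ via the value at $1_+$, then match these with $\mat_n^R(\bars)$ through $\rho$ and check multiplicativity directly. The paper's argument is terser but structurally identical; in fact you are more careful than the paper on one point, namely that Lemma~\ref{endosm} is stated only for finite $F,F'$ while $\bars[n_+]$ as an $\sss$-module is $\sss[M\wedge n_+]$ with $M\wedge n_+$ infinite, so one must invoke the \emph{argument} of that lemma rather than its literal statement---the paper simply writes ``by Lemma~\ref{endosm}'' without comment.
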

\proof~With $\mu=(\mu_{ij})\in \mat_n^R(\bars)$,  $\widetilde{\rho(\mu)}$ defines a natural transformation 
$$\widetilde{\rho(\mu)}: \bars[F] \to \bars[F],\quad \widetilde{\rho(\mu)}(X)= \id_X\wedge \rho(\mu): X\wedge M\wedge n_+\to X\wedge M\wedge n_+$$  
that commutes with the action of $M$. Thus it determines an endomorphism $\widetilde{\rho(\mu)}\in \End_\bars(\bars[F])$. Let $\mu, \mu'\in \mat_n^R(\bars)$: their product  is given by 
$$
(\mu \mu')_{ik}=\begin{cases}\mu_{ij}\mu'_{jk} & {\rm if}~ \exists j ~{\rm s.t.}~ \mu_{ij}\neq *~{\rm and}~   \mu'_{jk}\neq * \\ * & {\rm otherwise.}\end{cases}
$$
By applying \eqref{endm} one gets:  $\rho(\mu\mu')=\rho(\mu)\circ \rho(\mu')$, since $\rho(\mu)\circ \rho(\mu')(\alpha,k)\neq *$ if and only if there exist $j$ with $\mu'_{jk}\neq*$ and $i$ with $\mu_{ij}\neq*$. In that case, one has: $\rho(\mu)\circ \rho(\mu')(\alpha,k)=(\mu_{ij}\mu'_{jk}\alpha,i)=((\mu \mu')_{ik}\alpha,i)$. This shows that  $\tilde\rho$ is a multiplicative map. It is injective by construction. Next we show that it is also surjective. Let $\phi\in  \End_\bars(\bars[F])$. Then by Lemma \ref{endosm}  $\phi=\tilde \psi$ where $\psi$  is the restriction $\phi(1_+)$. This restriction commutes with the action of $M$ on $(\bars[F])(1_+)=F\wedge M$ and thus it is given by a matrix $\rho(\mu)$ acting as in \eqref{endm}. \endproof 
  
  For a given   $\sss$-algebra $A$,  we denote by $\M_n^{R}(A)$  the $\sss$-algebra  of matrices over $A$ defined in \cite{DGM} (\S 2.1.4.1, example 2.1.4.3, 6). 
   Note that, up to transposition,  there are two equivalent definitions for such matrices:  we let $\M_n^{L}(A)$ be the functor ($\sss$-algebra) from finite pointed sets to pointed sets that maps a finite pointed set $X$ to the set of $n\times n$ matrices of elements of $A(X)$ with only one non-zero entry in each row. Similarly,   $\M_n^{R}(A)$ is the functor mapping a finite set $X$ to the set of $n\times n$ matrices of elements of $A(X)$ with only one non-zero entry in each column.\newline
Next proposition shows that one can define a bimodule $\M_n(A)$ over these two $\sss$-algebras as the functor  from finite pointed sets to pointed sets mapping $X$ to the set of $n\times n$ matrices of elements of $A(X)$ with no restriction on the matrix entries. The proposition is in fact a special case of the composition law for $\sss$-algebras viewed as endofunctors.
 \begin{prop}\label{endosm2} Let $A$ be an $\sss$-algebra. The following facts hold 
 \begin{enumerate}
  \item[(i)]~The action of $\M_n^{L}(A)$ on   $\M_n(A)$  by left multiplication 
  $$
  \M_n^{L}(A)(X)\times \M_n(A)(Y)\to \M_n(A)(X\wedge Y)
  $$
  turns $\M_n(A)$ into a left module over  $\M_n^{L}(A)$.	
  \item[(ii)]~The action of $\M_n^{R}(A)$ on   $\M_n(A)$  by right multiplication 
  $$
  \M_n(A)(X)\times \M_n(A)^{R}(Y)\to \M_n(A)(X\wedge Y)
  $$
  turns $\M_n(A)$ into a right module over  $\M_n^{R}(A)$.	
  \end{enumerate}
  \end{prop}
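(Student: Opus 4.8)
The plan is to unwind the statement into the familiar formula for matrix multiplication and then to observe that, in each of the two situations, the row/column hypothesis is exactly what makes that formula legitimate over a base with no addition. Writing $\mu_A\colon A(X)\wedge A(Y)\to A(X\wedge Y)$ for the multiplication and $e_A\colon\sss\to A$ for the unit of the $\sss$-algebra $A$, I would first record the combinatorial normal form: an element $a=(a_{ij})\in\M_n^L(A)(X)$ amounts to a partial function $\tau\colon\{1,\dots,n\}\rightharpoonup\{1,\dots,n\}$ together with non-basepoint elements $a_{i\tau(i)}\in A(X)$ on its graph, and dually an element of $\M_n^R(A)(Y)$ is a partial function $\sigma$ carrying non-basepoint entries $a_{\sigma(k)k}$. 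I would then \emph{define} the left action by $(a\cdot b)_{ik}:=\mu_A(a_{i\tau(i)}\wedge b_{\tau(i)k})$ when $\tau(i)$ is defined and $(a\cdot b)_{ik}:=*$ otherwise — that is, the usual $(a\cdot b)_{ik}=\sum_j\mu_A(a_{ij}\wedge b_{jk})$, in which the hypothesis on $a$ guarantees at most one non-basepoint summand — and dually $(b\cdot a)_{ik}:=\mu_A(b_{i\sigma(k)}\wedge a_{\sigma(k)k})$ for the right action.

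Second, I would check that these assignments are natural in $X$ and in $Y$ and factor through the smash product, so that they define morphisms of $\sss$-modules $\M_n^L(A)\wedge\M_n(A)\to\M_n(A)$ and $\M_n(A)\wedge\M_n^R(A)\to\M_n(A)$: naturality is immediate because a map of finite pointed sets acts entrywise through $A$ without altering which entries are non-basepoint, hence commutes with the reindexing by $\tau$ (resp. $\sigma$), and the bifunctoriality of $\mu_A$ handles the rest; the factorization through the smash holds since $a=*$ forces every row of $a$ to be identically $*$, whence $a\cdot b=*$, and symmetrically in the other variable. Here I would also note that the \emph{same} rule applied to two one-per-row (resp. one-per-column) matrices yields a one-per-row (resp. one-per-column) matrix, with associated partial function the composite $\tau'\circ\tau$ (resp. $\sigma\circ\sigma'$); thus the internal multiplications of $\M_n^L(A)$ and $\M_n^R(A)$ are themselves well defined, and the proposition is really the compatibility of these with the unrestricted $\M_n(A)$.

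Third come the module axioms. Unitality is one line: the identity matrix is $(e_A\delta_{ij})$, with $\tau=\mathrm{id}$, so $(\mathbf 1\cdot b)_{ik}=\mu_A(e_A\wedge b_{ik})=b_{ik}$ by the left unit law for $A$, and dually on the right. For associativity I would compute the $(i,k)$-entry of $(aa')\cdot b$ and of $a\cdot(a'\cdot b)$: in the triple ``sum'' $\sum_{j,l}\mu_A\!\big(a_{ij}\wedge\mu_A(a'_{jl}\wedge b_{lk})\big)$ the row hypotheses on $a$ and on $a'$ leave only the single term $j=\tau(i)$, $l=\tau'(\tau(i))$, and the two bracketings agree by associativity of $\mu_A$. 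The right-module identity, and the commutation of the left and right actions that upgrades $\M_n(A)$ to the bimodule advertised just before the statement, are proved the same way: the relevant double ``sum'' $\sum_{j,l}\mu_A\!\big(a_{ij}\wedge\mu_A(b_{jl}\wedge a'_{lk})\big)$ collapses because the row condition on $a$ kills all $j\neq\tau(i)$ and the column condition on $a'$ kills all $l\neq\sigma(k)$.

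I do not expect a genuine obstacle: over an honest ring this is all the bookkeeping of matrix multiplication, and the only way it could fail over $\sss$ — having to add several entries of $A(X)$ with no ambient addition — is exactly what the one-non-zero-entry-per-row/column conditions forbid, which is the whole point of the hypotheses. The mildly delicate step, and the one I would write out most carefully, is checking that every implicit index sum in sight really does collapse to a single term, i.e. that the classes of one-per-row and one-per-column matrices are closed under the operations involved and that the surviving index is the expected composite; once that is granted, the axioms reduce entrywise to the $\sss$-algebra axioms for $A$. As remarked before the statement, one can also sidestep the bookkeeping altogether by viewing $\sss$-algebras as monoids for the composition of endofunctors of $\Se_*$ and recognising $\M_n^L(A)$ and $\M_n^R(A)$ as the endomorphism $\sss$-algebras of the free rank-$n$ module in the two conventions of \cite{DGM}, with $\M_n(A)$ the bimodule of all homomorphisms; then (i) and (ii) are instances of the composition law recalled there.
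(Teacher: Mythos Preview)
Your proposal is correct and follows essentially the same approach as the paper: both reduce the well-definedness of the matrix product to the observation that the one-nonzero-entry-per-row (resp.\ per-column) hypothesis collapses each entrywise ``sum'' to at most one term, so that the multiplication of $A$ suffices. The paper's proof is a two-sentence sketch of exactly this point (referring to \cite{DGM}), whereas you have written out the naturality, smash-factorization, and module axioms in full; your closing remark about the composition-of-endofunctors viewpoint also matches the paper's comment just before the statement.
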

  \proof The proof is the same as the one in \cite{DGM}: one simply needs to check that the product in the $\sss$-algebra $A$ determines a well defined product of matrices. To this end, the point is that the sum involved in determining the matrix element at position $(i,j)$  is obtained  from a row by column  product of two matrices  that only contain one non-zero term. This fact holds as long as either the rows of one matrix or the columns of the other one contain only one non-zero element: this is the case in (i) and (ii).\endproof 
  
 For $A=\bars$ and $X=1_+$ there is an isomorphism of pointed monoids\footnote{Note  that the $\sss$-algebra $\M_n^{R}(\bars)$ is not the same as the spherical algebra of the monoid $\mat_n^R(\bars)$} $\M_n^{R}(\bars)(1_+)=\mat_n^R(\bars)$. Moreover, the set of matrices $\mat_n(\bars)=M_n(\sss[M])$ ($M=(\Q/\Z)_+$) coincide with $\M_n(\bars)(1_+)$. By Proposition \ref{endosm2}, they form a bimodule with right and left actions provided by $\mat_n^R(\bars)$ acting on the right of $\mat_n(\bars)$ by matrix multiplication and by $\mat_n^L(\bars)=\M_n^{L}(\bars)(1_+)$ acting similarly on the left.  The role of the bimodule $\mat_n(\bars)=\M_n(\bars)(1_+)$ is to encode similarities. \newline 
 Given a field $k$ and the associated $\sss$-algebra $Hk$, morphisms of $\sss$-algebras $\bars=\sss[M]\to Hk$ correspond bijectively to (multiplicative) monoid homomorphisms $M\to k$ (\cite{schemeF1} Proposition 2.2 $(i)$). In particular, to an injective morphism $M\to k$ corresponds an extension of $\bars$ by the field $k$. In view of this fact, we introduce the following
 
 \begin{defn} \label{invert} An element $\alpha\in \mat_n(\bars)=M_n(\sss[M])$ is invertible if and only if the matrix $\alpha\in M_n(k)$ is invertible in all field extensions $k$ of $\bars$.	
 \end{defn}
 
   Matrix similarity in $\mat_n(\bars)$ is stable by taking powers, as illustrated by the following
  
 \begin{lem}\label{equiv} Let $\alpha\in\mat_n(\bars)$,   $\mu\in\mat_n^R(\bars)$, $\gamma\in\mat_n^L(\bars)$, such that $\gamma \alpha=\alpha\mu$. Then one has $\gamma^k \alpha=\alpha\mu^k$ for all $k\in \N$.	
\end{lem}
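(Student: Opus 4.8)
The plan is to prove this by a straightforward induction on $k$, using associativity of the bimodule structure established in Proposition \ref{endosm2}. The base case $k=1$ is the hypothesis $\gamma\alpha=\alpha\mu$ itself (and $k=0$ is the trivial identity $\alpha=\alpha$, interpreting $\gamma^0=\mu^0$ as the identity matrix), so the content is entirely in the inductive step.

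For the inductive step, suppose $\gamma^k\alpha=\alpha\mu^k$ for some $k\in\N$. The key point is that all three products $\gamma^{k+1}\alpha$, $\gamma^k\alpha\mu$, and $\alpha\mu^{k+1}$ make sense as elements of $\mat_n(\bars)$: indeed $\gamma\in\mat_n^L(\bars)$ has one nonzero entry per row and $\mu\in\mat_n^R(\bars)$ has one nonzero entry per column, so by Proposition \ref{endosm2}(i) one may multiply on the left of $\mat_n(\bars)$ by elements of $\mat_n^L(\bars)$, and by Proposition \ref{endosm2}(ii) one may multiply on the right by elements of $\mat_n^R(\bars)$, and these operations can be iterated since $\mat_n^L(\bars)$ and $\mat_n^R(\bars)$ are closed under multiplication. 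Then I would compute
\[
\gamma^{k+1}\alpha=\gamma^k(\gamma\alpha)=\gamma^k(\alpha\mu)=(\gamma^k\alpha)\mu=(\alpha\mu^k)\mu=\alpha\mu^{k+1},
\]
where the first and last equalities are associativity of the left $\mat_n^L(\bars)$-action, the second is the hypothesis, the third is the compatibility of the left and right actions (the bimodule axiom, i.e.\ $(\gamma\cdot x)\cdot\mu=\gamma\cdot(x\cdot\mu)$ for $x\in\mat_n(\bars)$, which holds because the underlying product of matrices over the $\sss$-algebra $\bars$ is associative wherever it is defined), and the fourth is the induction hypothesis.

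The main (and really only) thing to be careful about is that all the associativity and bimodule-compatibility rearrangements above are legitimate as identities of morphisms of $\sss$-modules, not merely on the level of the pointed sets $\mat_n(\bars)(1_+)$. Concretely, one must check that a product such as $\gamma^k\alpha\mu$ is independent of the order in which one forms the two multiplications, which comes down to the associativity of composition of natural transformations together with the observation (used in the proof of Proposition \ref{endosm2}) that the row-by-column sums defining matrix entries never involve more than one nonzero term, so no cancellation or reassociation issue arising from the lack of additive inverses in $\bars$ can occur. Once this is in place the induction closes immediately, and I expect no genuine obstacle --- the lemma is essentially the statement that "similarity commutes with powers" transported verbatim into the $\sss$-algebra setting, the only subtlety being the bookkeeping of which side each of $\gamma$ and $\mu$ is allowed to act on.
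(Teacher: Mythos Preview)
Your proof is correct and follows the same approach as the paper: a direct induction on $k$ using the hypothesis $\gamma\alpha=\alpha\mu$ together with associativity of the left and right actions on $\mat_n(\bars)$. The paper's proof is in fact more terse (it writes only $\gamma^2\alpha=\gamma\alpha\mu=\alpha\mu^2$ and invokes induction), whereas you spell out the bimodule bookkeeping via Proposition~\ref{endosm2}; but the argument is identical.
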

\proof One has
$\gamma^2 \alpha= \gamma \alpha\mu=\alpha\mu^2$, and by induction on $k$ one derives  $ \gamma^k \alpha=\alpha\mu^k$.\endproof

In view of Proposition \ref{endosm1}, it is equivalent to  consider endomorphisms $T\in \End_\bars(\bars[F])$ ($F$ finite pointed set) of $\bars$-modules $E=\bars[F]$,  or  matrices $\mu\in \mat_*^R(\bars)$, where $*$ is the  integer recording the cardinality of  the complement of the base point in $F$.
 One defines the notion of  invariant (of endomorphisms)   as follows
   	
 \begin{defn} \label{definv} An invariant  is a map \[\chi:\mat_*^R(\bars)\to R\]  to a commutative ring $R$  that  satisfies the following conditions:
\begin{enumerate}
\item $\chi(E,T)=\chi(T(E),T)$
\item $\chi((E_1\vee E_2, T_1\vee T_2)=\chi(E_1,T_1)+\chi(E_2,T_2)$,  $
\chi(E_1\wedge E_2,T_1\wedge T_2)=\chi(E_1,T_1)\chi(E_2,T_2)
 $,  where the smash product is taken over $\bars$
\item $\chi$ is invariant under similarity, \ie $\chi(\gamma)=\chi(\mu)$ if $\gamma \alpha=\alpha\mu$ for an invertible matrix $\alpha\in \mat_*(\bars)$.
\end{enumerate}
\end{defn}
Condition (i) is the same as in Definition 2.2 of \cite{CCAtiyah}, and has the role to mod out the zero endomorphisms. The second condition implements the ring structure. Finally,  (iii) realizes invariance under similarity. 

\subsection{Construction of the universal invariant}
We shall define the universal invariant of endomorphisms after applying the extension of scalars from $\bars$ to the maximal cyclotomic extension of $\Q$. In that set-up  Almkivst's original result applies and associates to (square) matrices a divisor with coefficients in the multiplicative group of the field. Our result states that the divisor has coefficients in the group of roots of unity. \newline
Next proposition gives the construction of the invariant  of endomorphisms. We keep the same notations of \S \ref{sec2.1}, in particular  $M=(\Q/\Z)_+$  denotes the multiplicative, pointed monoid of abstract roots of unity.

\begin{prop}\label{defninv} Let $T\in\mat_n^R(\bars)$,  and   $\kappa:M\hookrightarrow k$ be an injective morphism into an algebraically closed field extension of    $\bars$ of characteristic zero.
\begin{enumerate}
\item ~The
 divisor $D$  defined by Almkvist's invariant of $\kappa(T)\in\mat_n(k)$ has coefficients in $\kappa(M^\times)$. The divisor $\tau(T):=\kappa^{-1}(D)$ with coefficients in $M^\times$ is independent of the choice of $\kappa$.
\item ~The map 
\[
\tau: \mat_n^R(\bars)\to \Z[\Q/\Z], \quad \tau(T):=\kappa^{-1}(D)
\]
defines an invariant. 
\end{enumerate}
\end{prop}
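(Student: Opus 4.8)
The plan is to pick one admissible embedding $\kappa\colon M\hookrightarrow k$, push the whole question down to ordinary linear algebra over $k$ — where Almkvist's theorem is available — read off the answer, and then notice that the answer is purely combinatorial in $T$, hence the same for every other admissible $\kappa$. So the first step is to understand the matrix $\kappa(T)\in\mat_n(k)=M_n(k)$. Since $\kappa$ is a monoid homomorphism, $\kappa(T)$ still has exactly one non-zero entry per column and each such entry is a root of unity, so $\kappa(T)e_j=z_j\,e_{f(j)}$ for a partial self-map $f$ of $\{1,\dots,n\}$ and scalars $z_j\in\kappa(M^\times)$; one also checks that in characteristic $0$ one has $\kappa(M^\times)=\mu_\infty(k)$ (the image $\cong\Q/\Z$ has a subgroup of every order $\ell$, which must fill out the cyclic group $\mu_\ell(k)$), so this group is divisible and, in particular, closed under extraction of $\ell$-th roots. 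Decomposing the functional graph of $f$ into its cycles $C_1,\dots,C_s$ and the tails feeding into cycles (or into vanishing columns), and ordering the basis accordingly, makes $\kappa(T)$ block triangular: one block $B_c$ per cycle $c$, plus a strictly triangular remainder from the tails which contributes only a power of $t$ to the characteristic polynomial. For a cycle $c$ of length $\ell_c$ the vector $e_j$ ($j\in C_c$) is cyclic for $B_c$ and $B_c^{\ell_c}=\kappa(q_c)\,\mathrm{Id}$, where $q_c\in M^\times$ is the product in $M$ of the $\ell_c$ labels around $c$; hence $B_c$ has characteristic polynomial $t^{\ell_c}-\kappa(q_c)$, whose non-zero roots are the $\ell_c$ distinct $\ell_c$-th roots of $\kappa(q_c)$, all lying in $\kappa(M^\times)$. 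Therefore Almkvist's divisor $D=D(\kappa(T))$ — the divisor of non-zero eigenvalues of $\kappa(T)$ with multiplicity, equivalently of $\det(1-t\kappa(T))$ — is supported on $\kappa(M^\times)$. This is (1a), and it makes $\tau(T):=\kappa^{-1}(D)\in\Z[\Q/\Z]$ meaningful.

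For (1b), applying $\kappa^{-1}$ to the eigenvalues coming from a cycle $c$ yields exactly $\{r\in\Q/\Z\mid\ell_c\,r=q_c\}$, a set of $\ell_c$ elements depending only on $(\ell_c,q_c)$. Thus
\[
\tau(T)=\sum_{c=1}^{s}\ \sum_{\ell_c r=q_c}e(r)=\sum_{c=1}^{s}\tilde\rho_{\ell_c}\big(e(q_c)\big)\in\Z[\Q/\Z],
\]
an expression involving only the cycle data of $T$; in particular it does not depend on the chosen $\kappa$, and on a single $\ell$-cycle it recovers the operator $\tilde\rho_\ell$ of \S\ref{bcbarS}.

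For (2) I would check the three axioms of Definition \ref{definv} in turn, each time transporting the assertion to $M_n(k)$ via $\kappa$. Axiom (i): $\kappa\big(T|_{T(E)}\big)$ is obtained from $\kappa(T)$ by deleting the rows and columns indexed by basis elements outside $f(\Dom f)$, i.e. by truncating the tails while keeping every cycle and its labels untouched; since Almkvist's invariant is insensitive to the nilpotent part, $\tau(T(E),T)=\tau(E,T)$. Axiom (ii): extension of scalars along $\kappa$ turns $\vee$ over $\bars$ into $\oplus$ of $k$-modules and $\wedge$ over $\bars$ into $\otimes_k$, and — using only that $\kappa$ is a monoid homomorphism — carries $T_1\vee T_2$ to $\kappa(T_1)\oplus\kappa(T_2)$ and $T_1\wedge T_2$ to the Kronecker product $\kappa(T_1)\otimes\kappa(T_2)$; additivity of the eigenvalue divisor over block sums and the identity $[\alpha]*[\beta]=[\alpha\beta]$ for eigenvalues of a Kronecker product are precisely the ring structure of Almkvist's $\W_0$, which under the group isomorphism $\kappa^{-1}\colon\kappa(M^\times)\xrightarrow{\ \sim\ }M^\times$ becomes the sum and the convolution (group-ring) product of $\Z[\Q/\Z]$; this yields $\tau(T_1\vee T_2)=\tau(T_1)+\tau(T_2)$ and $\tau(T_1\wedge T_2)=\tau(T_1)\tau(T_2)$. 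Axiom (iii): if $\gamma\alpha=\alpha\mu$ with $\alpha$ invertible, then $\kappa(\alpha)\in\GL_n(k)$ by Definition \ref{invert}, so $\kappa(\gamma)=\kappa(\alpha)\,\kappa(\mu)\,\kappa(\alpha)^{-1}$ is conjugate to $\kappa(\mu)$ in $M_n(k)$; conjugate matrices share the characteristic polynomial, hence the Almkvist divisor, so $D(\kappa(\gamma))$ is again supported on $\kappa(M^\times)$ and $\tau(\gamma)=\kappa^{-1}(D)=\tau(\mu)$.

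The classical inputs here — the cycle decomposition of a (partial) monomial matrix, the eigenvalues of a Kronecker product, and the conjugation/sum/product behaviour of Almkvist's $\W_0$ — are routine. The point needing care is the translation dictionary between the world of $\sss$-modules and ordinary matrices: that $T(E)$ is again an $\bars$-module of the type described by Proposition \ref{endosm1}, that extension of scalars $Hk\wedge_\bars(-)$ really converts $\vee$, $\wedge$ and ``image'' into $\oplus$, $\otimes_k$ and image of matrices, and that the base-point (zero) conventions are matched so that $\kappa$ being merely a monoid homomorphism is enough to induce the ring operations. Once this bookkeeping is in place, the explicit formula $\tau(T)=\sum_c\tilde\rho_{\ell_c}(e(q_c))$ together with Almkvist's theorem completes the proof.
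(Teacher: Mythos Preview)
Your argument is correct and follows the same route as the paper: push $T$ to an ordinary matrix over $k$ via $\kappa$, observe that the non-nilpotent part is a permutation-with-root-of-unity-entries matrix (the paper phrases this via the stabilised image $E_\ell=t^\ell(E)$, you via the cycle decomposition of the functional graph---these are the same thing), deduce that the non-zero eigenvalues are roots of unity, and then invoke the standard properties of Almkvist's invariant for (ii). Your treatment is in fact more explicit than the paper's: you extract the closed formula $\tau(T)=\sum_c\tilde\rho_{\ell_c}(e(q_c))$, which the paper only reaches implicitly later (in the proof of Theorem~\ref{w0s1} and in \eqref{2rho}), and this formula gives a cleaner proof of independence from $\kappa$ than the paper's bare assertion. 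One cosmetic slip: ``exactly one non-zero entry per column'' should read ``at most one'', since zero columns are allowed; you correct this immediately by passing to a \emph{partial} self-map $f$, so nothing is affected.
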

\proof (i)~Let $t=(t_{ij})\in M_n(k)$ be a matrix whose non-zero entries are roots of unity and with at most one non-zero element $t_{ij}$ in each column. We claim that the eigenvalues of $t$ are either  $0$ or roots of unity. Let $E=k^n$ be the $k$-vector space on which $t$  acts. The subspaces $E_j:=t^j(E)$ form a decreasing filtration of $E$ for which there exists a finite index $\ell$ such that $E_{\ell+1}=E_\ell$. The non-zero eigenvalues of $t$ are the same as the eigenvalues of the restriction $t_\ell$ of $t$ on $E_\ell$. We verify that the endomorphism $t_\ell$  has finite order. Indeed, let 
\begin{equation}\label{phi}
\phi:n_+\to n_+\quad  \phi(j) = \begin{cases} i & \text{if $t_{ij}\neq 0$}\\ * &\text{if  $t_{ij}=0 ~\forall i$.}
\end{cases}
\end{equation}
The range of $\phi^\ell$ labels a basis of  $E_\ell$: in this basis the matrix of $t_\ell$ describes the permutation obtained by restricting $\phi$, whose entries are in roots of unity. Such a matrix is periodic thus all of its eigenvalues are roots of unity. This shows that Almkivst's invariant of $\kappa(T)\in\mat_n(k)$, \ie  a divisor $D$ with coefficients in $k^\times$, has in fact coefficients in $\kappa(M^\times)$. Moreover one also derives that the divisor $\tau(T):=\kappa^{-1}(D)$ with coefficients in $M^\times$ is independent of the choice of $\kappa$.\newline
(ii)~The map $\tau$ fulfills the three conditions of Definition \ref{definv} since they hold true for Almkivst's invariant, in particular  the operations in condition (ii)  correspond to direct sum and tensor product of modules.
\endproof

\subsection{Completeness of the invariant $\tau$}\label{sec:2.3}
To prove that the above construction defines a universal invariant, one applies the same proof as in  Theorem 3.3 of \cite{CCAtiyah}  (in the case of endomorphisms of finite $\sss$-modules), to show the injectivity of  $\tau$. The main fact to verify  is that by implementing the (algebraic) Fourier transform one can diagonalize any  matrix in $\mat_n(\bars)$ corresponding to a permutation at the set level. We shall see (in the proof of Theorem \ref{w0s1}) that for a cycle of such permutation one can choose a basis so that the matrix of such permutation is equivalent to the cyclic permutation matrix multiplied by a root of unity. Next proposition gives the  algebraic relation between the endomorphisms determined by the cyclic permutation matrix $C(n)$ of order $n$
$$
C(n)_{ij}:=\begin{cases}1 & \text{ if ~$i=j+1 ~ (n)$} \\ 0 & \text{otherwise}\end{cases} \qqq i, j\in \{0,\ldots ,n-1\}
$$
 and the diagonal matrix $\Delta(n)$ whose entries are the full set of $n$-th roots of unity, \ie $\Delta(n)_{jj}=e(j/n)$ for $j\in \{0,\ldots ,n-1\}$. The proposition shows that there is a non-trivial algebraic relation between $\Delta(n)$ and $C(n)$, and  by Lemma \ref{equiv} the same relation holds true when arbitrary powers of these two matrices are involved. 

\begin{prop}\label{fourier} For $n\in \N$, let $\mu_n:=\{e( a/n)\mid a\in \Z/n\Z\}$ be the group of $n$-th roots of unity. 
\begin{enumerate}
\item ~The matrix $V=(V_{ij})\in \mat_n(\sss[\mu_n])$: $V_{ij}=e(i j/n)
$
is the matrix of the Fourier transform on the cyclic group $\Z/n\Z$. 
\item ~In any field extension of $\sss[\mu_n]$ one has $n\neq 0$, and  the inverse of $V$ is, up to the overall factor $n$, the matrix $W=(W_{ij})$: $W_{ij}=e(-i j/n)$.
\item ~The following relations hold
\begin{equation}\label{dv}
\Delta(n)V=VC(n), \qquad C(n)W=W \Delta(n).	
\end{equation}
\end{enumerate}
\end{prop}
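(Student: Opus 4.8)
The three statements are all established by direct computation, the organizing principle being that in $M=(\Q/\Z)_+$ one has $e(a/n)=e(b/n)$ precisely when $a\equiv b\pmod n$, so that every exponent appearing below may be reduced modulo $n$. For $(1)$ I would only recall that the characters of $\Z/n\Z$ are the homomorphisms $\chi_i\colon j\mapsto e(ij/n)$, $i\in\Z/n\Z$, and that the discrete Fourier transform carries the indicator function of $\{j\}$ to the vector whose $i$-th coordinate is $\chi_i(j)=e(ij/n)$; reading off the resulting matrix in the standard bases gives exactly $V_{ij}=e(ij/n)$.

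For $(2)$ the first task is to prove that $n\neq 0$ in every field extension. By \cite{schemeF1}, Proposition~2.2$(i)$, a morphism $\sss[\mu_n]\to Hk$ with $k$ a field is the same datum as a monoid homomorphism $\mu_n\to k$, and for an actual extension this homomorphism is injective; its image then consists of $n$ pairwise distinct roots of $X^n-1$ in $k$. Were $\mathrm{char}(k)=p$ a divisor of $n$, then writing $n=p^b m$ with $p\nmid m$ would give $X^n-1=(X^m-1)^{p^b}$, a polynomial with at most $m<n$ distinct roots --- a contradiction. Once $n$ is known to be a unit of $k$, the rest is the orthogonality of characters: identifying $\mu_n$ with its image,
\[
(VW)_{ik}=\sum_{j=0}^{n-1}e(ij/n)\,e(-jk/n)=\sum_{j=0}^{n-1}\zeta^{\,j},\qquad \zeta:=e\big((i-k)/n\big),
\]
which equals $n$ when $i=k$ (then $\zeta=1$) and otherwise vanishes, since $(\zeta-1)\sum_{j=0}^{n-1}\zeta^{\,j}=\zeta^{n}-1=0$ while $\zeta-1\neq 0$ is invertible in $k$. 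The identical computation gives $WV=n\,\id$, whence $V^{-1}=\tfrac1n W$.

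For $(3)$ I would check the two relations of \eqref{dv} entrywise, observing that no sums or denominators enter here, so these identities already make sense and hold over $\sss[\mu_n]$ itself. Indeed $(\Delta(n)V)_{ij}=e(i/n)\,e(ij/n)=e\big(i(j+1)/n\big)$, while the $j$-th column of $VC(n)$ is the $(j+1)$-st column of $V$, so $(VC(n))_{ij}=V_{i,j+1}=e\big(i(j+1)/n\big)$; hence $\Delta(n)V=VC(n)$. Likewise $(C(n)W)_{ij}=W_{i-1,j}=e\big(-(i-1)j/n\big)=e(-ij/n)\,e(j/n)=W_{ij}\,\Delta(n)_{jj}=(W\Delta(n))_{ij}$, so $C(n)W=W\Delta(n)$; alternatively the second relation follows from the first by conjugating with $V$ and invoking $(2)$. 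The only step that demands real care is the characteristic argument in $(2)$ --- establishing that a field carrying $n$ distinct $n$-th roots of unity cannot have characteristic dividing $n$ --- while everything else amounts to bookkeeping with indices modulo $n$.
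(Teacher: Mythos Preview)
Your proof is correct and follows the same approach as the paper's: part~(i) by recalling the definition of the discrete Fourier transform, part~(ii) by observing that a field extension of $\sss[\mu_n]$ contains $n$ distinct $n$-th roots of unity and hence has characteristic prime to $n$, and part~(iii) by direct entrywise verification using $e(x)e(y)=e(x+y)$. You have simply spelled out the details (the orthogonality-of-characters computation for $VW=n\,\id$ and the explicit index bookkeeping for \eqref{dv}) that the paper leaves implicit.
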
 
\proof (i)~It suffices to recall that the Fourier transform on the cyclic group $\Z/n\Z$ is the transformation $F$ of functions $f:\Z/n\Z\to \C$ defined by
$$
F(f)(a)=\sum \epsilon(a b/n)f(b), \quad  \epsilon(x):=\exp(-2\pi i x)\qqq x\in \R.
$$ 
(ii)~Let $k$ be a  field extension of $\sss[\mu_n]$, then $k$ contains $n$ distinct roots of unity of order $n$,  thus the characteristic of $k$ is prime to $n$. It follows that $n\neq 0$ in $k$ and the inverse of $V$ is $\frac 1n W$.\newline
(iii)~One checks \eqref{dv} by direct computation using the equality $e(x)e(y)=e(x+y)$.\endproof

We can now state and prove the main result of this section

\begin{thm}\label{w0s1} The  ring $\W_0(\bars)$ is canonically isomorphic to  the group ring $\Z[\Q/\Z]$. \newline
The  invariant $\tau: \mat_*^R(\bars)\to \Z[\Q/\Z]$ is universal and it extends the  additive invariant of Theorem \ref{w0s}.
\end{thm}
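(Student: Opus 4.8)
The plan is to upgrade $\tau$ to a ring isomorphism $\W_0(\bars)\xrightarrow{\ \sim\ }\Z[\Q/\Z]$ by producing a normal form for classes of endomorphisms, and then to observe that the very same reduction exhibits $\tau$ as the universal invariant. By Proposition \ref{defninv} the map $\tau$ is a well-defined invariant valued in $\Z[\Q/\Z]$; since the relations (i)--(iii) of Definition \ref{definv} are exactly those defining $\W_0(\bars)$ — with (ii) sending the $\vee$-sum to addition and the $\wedge$-product to the convolution product $e(r)e(s)=e(r+s)$, and (iii) automatic because a similarity $\gamma\alpha=\alpha\mu$ with $\alpha$ invertible is an isomorphism of pairs — $\tau$ descends to a ring homomorphism $\tau\colon\W_0(\bars)\to\Z[\Q/\Z]$. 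Surjectivity is immediate: the $1\times1$ matrix $(e(r))\in\mat_1^R(\bars)$ has Almkvist divisor $[e(r)]$, so $\tau\big((\bars[1_+],(e(r)))\big)=[e(r)]$, and these classes $\Z$-span $\Z[\Q/\Z]$. Everything thus reduces to injectivity, which I phrase as: for the $\Z$-linear (and, by the smash computation, ring) map $s\colon\Z[\Q/\Z]\to\W_0(\bars)$, $[e(r)]\mapsto[(\bars[1_+],(e(r)))]$, one has $s\circ\tau=\id$.

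To prove $s\circ\tau=\id$ I follow the argument of Theorem~3.3 of \cite{CCAtiyah}. Let $T\in\mat_n^R(\bars)$ act on $E=\bars[n_+]$. Condition (i) of Definition \ref{definv} lets me replace $(E,T)$ by $(E_\ell,T_\ell)$ with $E_\ell=T^\ell(E)$ the stabilised image; as in the proof of Proposition \ref{defninv}(i), $E_\ell$ has a basis indexed by the range of the map $\phi$ of \eqref{phi} iterated, and in this basis $T_\ell$ is a monomial matrix with nonzero entries in $M^\times$ whose support is the permutation $\sigma$ induced by $\phi$. Splitting $\sigma$ into cycles splits $(E_\ell,T_\ell)$ as a wedge of cyclic blocks, so by additivity (condition (ii)) it suffices to treat one block: a cyclic permutation of length $m$ with root-of-unity weights whose product around the cycle equals $e(b)$ for some $b\in\Q/\Z$. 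Rescaling the basis vectors successively along the cycle — an invertible diagonal similarity in $\mat_m(\bars)$ in the sense of Definition \ref{invert}, admissible because the only obstruction to equalising the edge-weights is their total product — brings this block to the form $e(c_0)\cdot C(m)$, where $c_0$ is any chosen solution of $mc_0=b$ in $\Q/\Z$.

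Now Proposition \ref{fourier} finishes the computation: over $\sss[\mu_m]\subseteq\bars$ the Fourier matrix $V$ is invertible in the sense of Definition \ref{invert} — its determinant is a root of unity times $\pm m$, nonzero in every field extension of $\sss[\mu_m]$ by part (ii) — and by \eqref{dv} it conjugates $C(m)$ to the diagonal matrix $\Delta(m)$. Hence $e(c_0)\cdot C(m)$ is similar to $e(c_0)\cdot\Delta(m)=\mathrm{diag}\big(e(c_0+j/m)\big)_{0\le j<m}$, which splits as the wedge of the $1\times1$ blocks $(\bars[1_+],(e(c_0+j/m)))$; thus in $\W_0(\bars)$ the cyclic block equals $\sum_{j=0}^{m-1}[(\bars[1_+],(e(c_0+j/m)))]$. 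The set $\{c_0+j/m\}_{0\le j<m}$ is the full set of solutions of $mc=b$, hence independent of the choice of $c_0$ and equal to the eigenvalue divisor of the block, so summing over all cyclic blocks gives $[E,T]=s(\tau(T))$. Therefore $\tau$ and $s$ are mutually inverse and $\W_0(\bars)\cong\Z[\Q/\Z]$ canonically (no choice enters $\tau$, by Proposition \ref{defninv}(i)). Running the identical chain of reductions on an arbitrary invariant $\chi$ valued in a ring $R$ shows $\chi\big((E,T)\big)=\sum\chi\big((\bars[1_+],(e(r)))\big)$ with the same multiplicities, i.e. $\chi=f\circ\tau$ for the unique ring map $f\colon\Z[\Q/\Z]\to R$ with $f([e(r)])=\chi((\bars[1_+],(e(r))))$; this is the universality of $\tau$. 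That $\tau$ extends the additive invariant of Theorem \ref{w0s} follows because $\Aut(\Q/\Z)=\widehat\Z^*$ acts compatibly on $\mat_*^R(\bars)$, on $\W_0(\bars)$ and on $\Z[\Q/\Z]$, $\tau$ is equivariant (naturality of Almkvist's invariant), and passing to $\widehat\Z^*$-invariants returns the isomorphism $\W_0(\sss)\cong\Z[\Q/\Z]^{\widehat\Z^*}$ of Theorem \ref{w0s}.

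The main obstacle is the injectivity step, and inside it the two-stage reduction of a monomial matrix over $\bars$: first to a wedge of cyclic blocks, then — via the algebraic Fourier transform — to diagonal form. One must track carefully that a ``root'' $b/m$ of $e(b)\in\Q/\Z$ is only defined as the coset $c_0+\tfrac1m\Z/\Z$, check that the diagonalisation of each block genuinely matches its eigenvalue divisor, and verify that every similarity invoked — the successive basis rescalings and the matrix $V$ — is invertible in the strong sense of Definition \ref{invert}, the latter resting precisely on the determinant computation behind Proposition \ref{fourier}(ii).
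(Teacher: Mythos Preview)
Your proof is correct and follows essentially the same route as the paper's: reduce $(E,T)$ to its stable image via condition (i), split the resulting monomial permutation matrix into cyclic blocks by additivity, rescale each cycle by a diagonal root-of-unity matrix to reach the form $e(s)C(m)$, and then invoke Proposition~\ref{fourier} to diagonalise. The only cosmetic difference is packaging: the paper argues universality directly (any invariant $\chi$ satisfies $\chi=\beta\circ\tau$ for $\beta(r)=\chi([e(r)])$), whereas you first build the section $s$ and show $s\circ\tau=\id$, then rerun the reduction for an arbitrary $\chi$; the underlying computation is identical.
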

\proof Let $\chi: \mat_*^R(\bars) \to R$ be an  invariant (thus fulfilling the conditions of Definition \ref{definv}), then consider the map
$
\beta:\Q/\Z\to R$,  $\beta(r):=\chi([e(r)])$, $\forall r\in \Q/\Z
$,
where $[e(r)]$ is the endomorphism of the one dimensional module  $\bars$ given by multiplication by $e(r)$. By  Definition \ref{definv} (ii),    $\beta:\Q/\Z\to R^\times$ is a group homomorphism and hence it extends to a ring homomorphism $\beta:\Z[\Q/\Z]\to R$. Next, we show that $\chi=\beta\circ \tau$. Let $T\in \End_\bars(\bars[F])$, we prove that $\chi(T)=\beta(\tau(T))$. We identify $F=n_+$ and  let $\mu=(\mu_{ij})$ be the  matrix with $\tilde\rho(\mu)=T$ (see Proposition \ref{endosm1}). Let  $\phi:n_+\to n_+$ be the map as in \eqref{phi}.   The ranges $X_\ell$ of the powers $\phi^\ell$ form a decreasing sequence of subsets and we let $\ell$ be such that $X_\ell=X_{\ell+1}$. The matrix of the restriction of $T$ to $X_\ell$ is the matrix of the permutation obtained by restricting  $\phi$, thus using  Definition \ref{definv} (i) we can just consider the case where $\mu$ is the matrix of a permutation with entries in roots of unity. The required additivity in (ii) of Definition \ref{definv}, allows one to assume that the permutation is a cyclic permutation. Then we observe that the matrix of a cyclic permutation $\nu$ with entries roots of unity is equivalent, using a diagonal matrix whose entries are  ratios of entries of $\nu$,  to the matrix of a cyclic permutation of type $e(s)C(m)$, whose  entries are all the same root of unity $e(s)$. It then follows from Proposition  \ref{fourier} and  Definition \ref{definv} (iii) that  $\chi(T)=\chi(e(s)\Delta(m))$, and finally, using again (ii) of Definition \ref{definv}, one obtains  $\chi(T)=\beta(\tau(T))$.\endproof

\subsection{Frobenius and Verschiebung}
  The Frobenius endomorphisms and the Verschiebung maps  are  operators  in $\W_0(\sss)$ \cite{CCAtiyah}. The Frobenius ring endomorphisms $F_n$, $n\in\N$, are defined by the equality $F_n((E,T)):=(E,T^n)$ (we refer to \opcit for the notations). One easily  checks that
\begin{equation}\label{2sigma}
\tau(F_n(x))=\sigma_n(\tau(x))\qqq n\in \N, \ x\in \W_0(\bars),
\end{equation}
where  the group ring endomorphism $\sigma_n$ is defined, for each $n\in\N$, by
\begin{equation}\label{defnsig}
\sigma_n: \Z[\Q/\Z] \to \Z[\Q/\Z],\qquad \sigma_n(e(\gamma))=e(n\gamma).
\end{equation}
 The Verschiebung maps $V_n$ replace a pair $(E,T)$ by the endomorphism of the sum $\vee^n E$  that cyclicly permutes the terms and uses $T:E\to E$ to turn back from the last term to the first. The map $V_n$ is additive by construction and when applied to a one dimensional $\bars$-module $[e(a)]$ it gives  the sum of the $[e(b)]$'s where $nb=a$. One thus obtains
 \begin{equation}\label{2rho}
\tau(V_n(x))=\tilde\rho_n(\tau(x))\qqq n\in \N, \ x\in \W_0(\bars),
\end{equation}
 where $\tilde\rho_n$, $n\in\N$, is defined by
\begin{equation}\label{newrhos}
\tilde\rho_n: \Z[\Q/\Z] \to \Z[\Q/\Z], \qquad \tilde\rho_n(e(\gamma))=
\sum_{n\gamma'=\gamma}e(\gamma').
\end{equation}
Then, in analogy to and generalizing  what stated in \cite{CCAtiyah} \S 2.2, one derives the following

\begin{thm} \label{rabi10} The correspondences $\sigma_n\to F_n$,
$\tilde\rho_n\to V_n$  determine a canonical isomorphism of the integral BC-system (i.e. the Hecke algebra $\cH_\Z= \Z[\Q/\Z]\rtimes\N$)  with the Witt ring $\W_0(\bars)$ endowed with the Frobenius and Verschiebung maps. 
\end{thm}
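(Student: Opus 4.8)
The plan is to derive the theorem from Theorem \ref{w0s1} together with the compatibility formulas \eqref{2sigma} and \eqref{2rho}, by transport of structure along the canonical ring isomorphism $\tau:\W_0(\bars)\to\Z[\Q/\Z]$, following and generalizing the argument of \cite{CCAtiyah} \S 2.2.

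First I would recall the presentation of the integral Hecke algebra $\cH_\Z=\Z[\Q/\Z]\rtimes\N$ from \cite{CCarbc} \S 3: it is generated, as a ring, by the group ring $\Z[\Q/\Z]$ together with symbols $\mu_n,\tilde\mu_n$ ($n\in\N$), the defining relations being expressible entirely in terms of the group ring, the endomorphisms $\sigma_n$ of \eqref{defnsig} and the additive maps $\tilde\rho_n$ of \eqref{newrhos} --- multiplicativity $\mu_{nm}=\mu_n\mu_m$, $\tilde\mu_{nm}=\tilde\mu_n\tilde\mu_m$, the crossed-product relations $\mu_n x=\sigma_n(x)\mu_n$ and $x\tilde\mu_n=\tilde\mu_n\sigma_n(x)$, the relation $\mu_n x\tilde\mu_n=\tilde\rho_n(x)$, and commutation of $\tilde\mu_m$ with $\mu_n$ for $(m,n)=1$. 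In other words $\cH_\Z$ is the crossed product attached to the datum $(\Z[\Q/\Z],\sigma_n,\tilde\rho_n)$, and the right-hand side of the theorem, $\W_0(\bars)$ ``endowed with its Frobenius and Verschiebung'', is by definition the crossed product $\W_0(\bars)\rtimes\N$ built in exactly the same way from the datum $(\W_0(\bars),F_n,V_n)$.

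By Theorem \ref{w0s1} the map $\tau$ is a ring isomorphism, and \eqref{2sigma}, \eqref{2rho} say precisely that $\tau\circ F_n=\sigma_n\circ\tau$ and $\tau\circ V_n=\tilde\rho_n\circ\tau$. Thus $\tau$ transports any relation holding among $F_n,V_n$ on $\W_0(\bars)$ into the same relation among $\sigma_n,\tilde\rho_n$ on $\Z[\Q/\Z]$, and conversely; extending $\tau^{-1}$ by sending the generators $\mu_n,\tilde\mu_n$ of $\cH_\Z$ to the analogous generators of $\W_0(\bars)\rtimes\N$ then produces the desired canonical isomorphism $\cH_\Z\xrightarrow{\sim}\W_0(\bars)\rtimes\N$, under which $\sigma_n\mapsto F_n$ and $\tilde\rho_n\mapsto V_n$. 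What remains is to verify that $F_n,V_n$ really do satisfy the relations listed above.

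This verification is most economical on the group-ring side, where every identity is elementary and then transports back via $\tau$: $\sigma_n\sigma_m=\sigma_{nm}$ and $\tilde\rho_n\tilde\rho_m=\tilde\rho_{nm}$ are immediate from \eqref{defnsig}, \eqref{newrhos}; $\sigma_n\tilde\rho_n=n\cdot\id$ because $n\gamma'=\gamma$ has exactly $n$ solutions in $\Q/\Z$; the projection formula $\tilde\rho_n(x\,\sigma_n(y))=\tilde\rho_n(x)\,y$ and the relation $\tilde\rho_n(\sigma_n(x))=\tilde\rho_n(1)\,x$ both follow from the translation $\gamma'\mapsto\gamma'+\beta$ in \eqref{newrhos}; and $\sigma_m\tilde\rho_n=\tilde\rho_n\sigma_m$ for $(m,n)=1$ holds since multiplication by $m$ permutes the $n$-torsion subgroup of $\Q/\Z$. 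Because $\tau$ intertwines the two pairs of operators, these become exactly the relations satisfied by $F_n,V_n$, and they are moreover the familiar Frobenius--Verschiebung identities of a Witt ring. I expect the one genuine obstacle to be bookkeeping: isolating a correct and complete list of defining relations for $\cH_\Z$, so that calling it a crossed product is legitimate, and then matching that list term by term with the operator identities satisfied by Frobenius and Verschiebung on $\W_0(\bars)$. Once this dictionary is in place, Theorem \ref{w0s1} forces the isomorphism, and its canonicity is inherited from the canonicity of $\tau$ and of \eqref{2sigma}, \eqref{2rho}.
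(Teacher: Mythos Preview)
Your proposal is correct and follows exactly the route the paper indicates: the paper does not give a separate proof of this theorem but simply says ``in analogy to and generalizing what stated in \cite{CCAtiyah} \S 2.2, one derives the following'', treating it as an immediate consequence of Theorem~\ref{w0s1} together with the compatibility formulas \eqref{2sigma} and \eqref{2rho}. Your write-up fills in the bookkeeping (presentation of $\cH_\Z$, verification of the Frobenius--Verschiebung relations on the group-ring side, transport of structure along $\tau$) that the paper leaves implicit, but the strategy is the same.
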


\section{Analytic approach and noncommutative geometry}\label{sectanalysis}

On September 27 1993,  Dennis Sullivan sent to the first author a fax, reproduced in Figure \ref{fax},  that sets the scene of the interactions between noncommutative geometry and geometry of manifolds\footnote{A joint paper (with N. Teleman) appeared in Topology in 1994 (\cite{CST})}. In \cite{Crh}, the account of the analytic approach based on \cite{Co-zeta} was reduced to the minimum. This section is dedicated to explain our recent results on this analytic approach. Two  main tools in noncommutative geometry play a key role here, they are:
\begin{itemize}
\item {\bf The quantized calculus}	\item {\bf The notion of spectral triple}
\end{itemize}
The quantized calculus is applied in the semilocal framework and it provides, through the semilocal trace formula, both the operator theoretic formalism for the explicit formulas of Riemann-Weil and a conceptual reason for Weil's positivity (\S\S \ref{sectmainlem}, \ref{semiloc}, \ref{sectsemiloc}).\newline
Spectral triples (through Dirac operators) together with the understanding of the radical of the Weil quadratic form restricted to an interval $[\lambda^{-1},\lambda]$ using prolate functions, and the implementation of the map $\cE(f)(x):=x^{1/2}\sum_1^\lambda f(nx)$, allow one to detect the zeros of the Riemann zeta function up to imaginary part $2\pi \lambda^2$,  thus providing the operator theoretic replacement for the Riemann-Siegel  formula in analytic number theory (\S\S \ref{radical},  \ref{spectrip}).\newline
Both notions make essential use of operators in Hilbert space and of the following  dictionary 
\begin{figure}[H]
\centering
\begin{subfigure}{.4\textwidth}
  \centering
  \includegraphics[width=.75\linewidth]{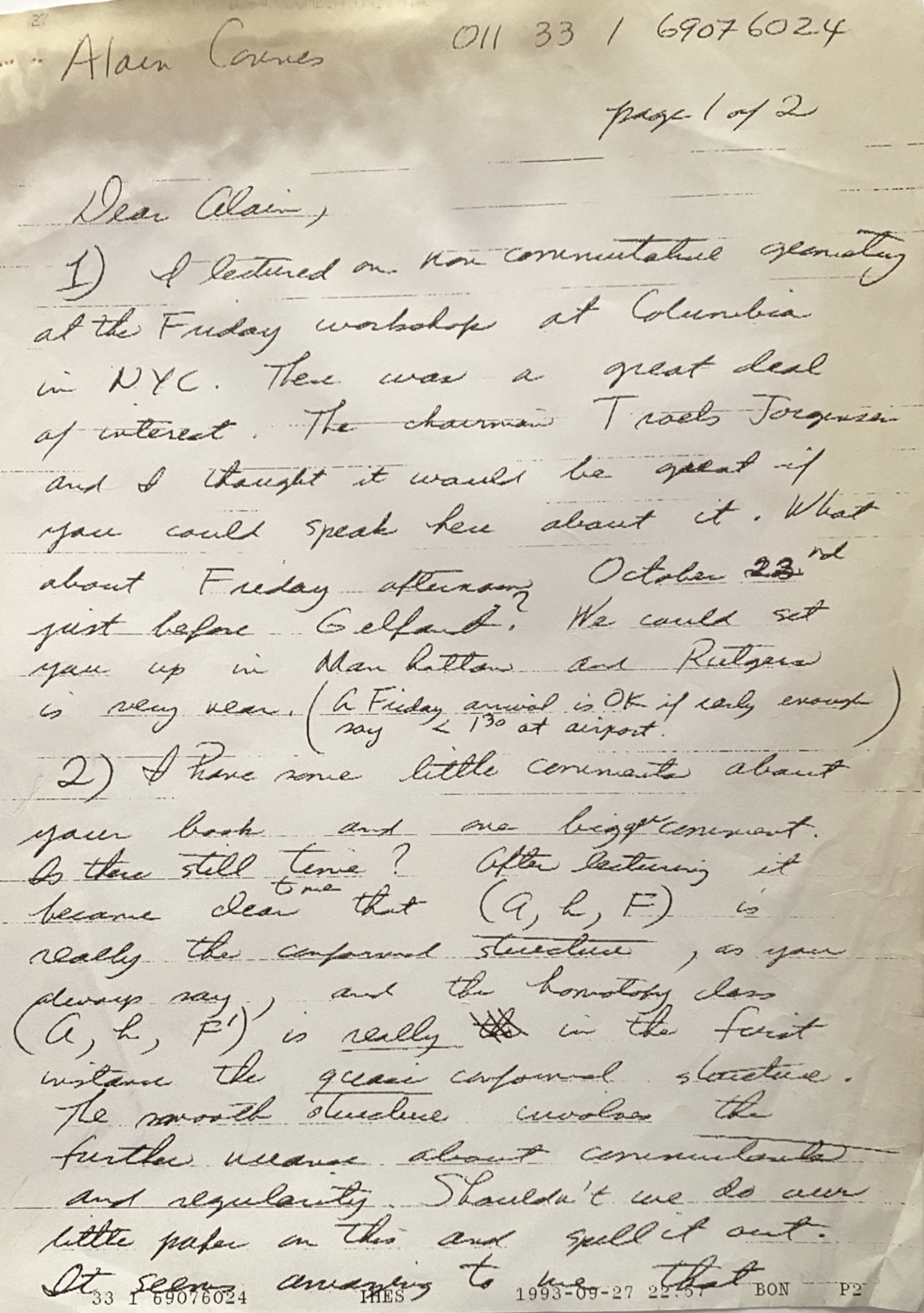}
  \caption{Page 1}
\end{subfigure}%
\begin{subfigure}{.4\textwidth}
  \centering
  \includegraphics[width=.75\linewidth]{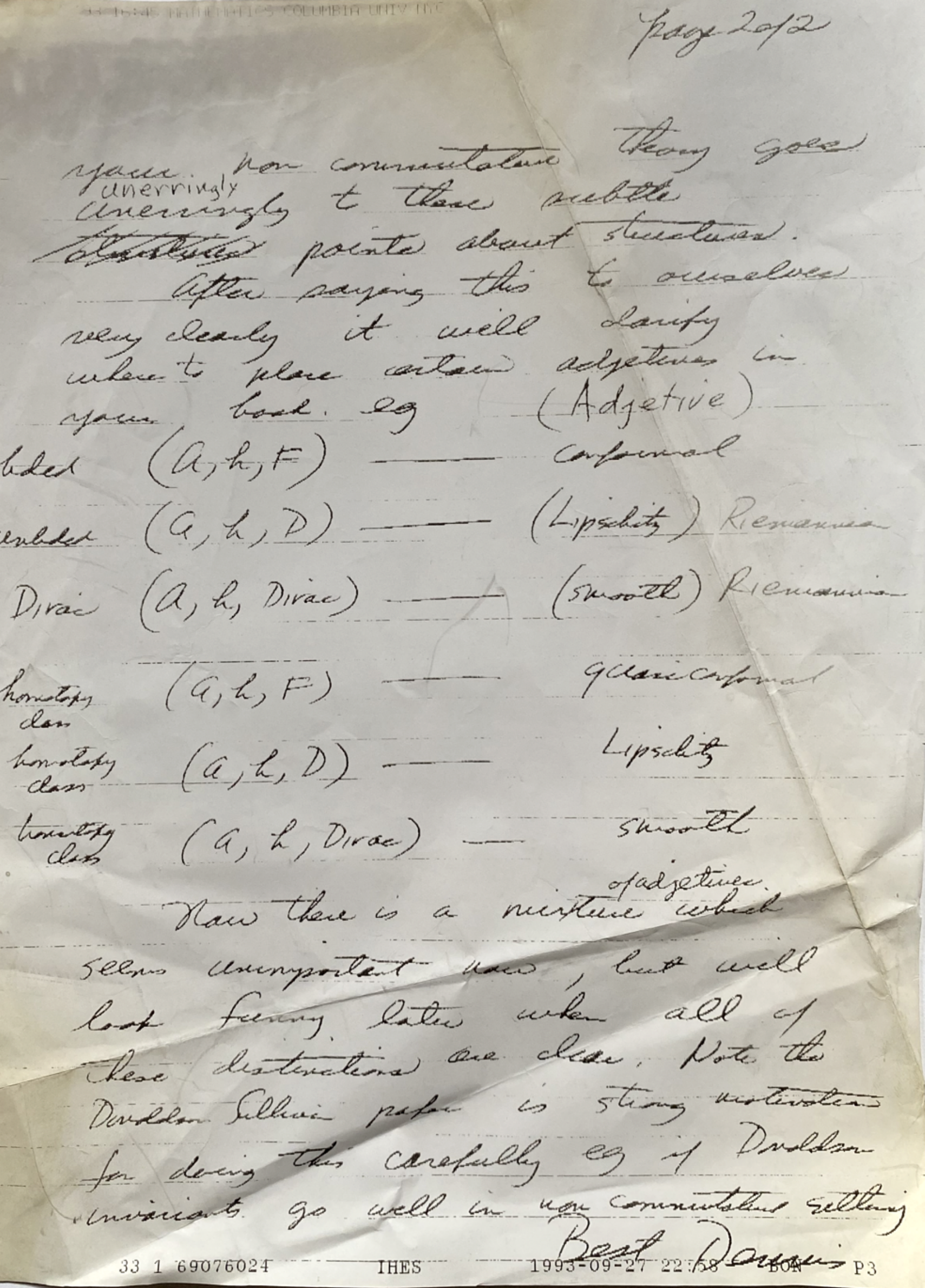}
  \caption{Page 2}
\end{subfigure}
\caption{
"... After lecturing it became clear 	that $(\cA,\cH,F)$ is really the conformal structure, as you always say, and the homotopy class $(\cA,\cH,F')$ is \underline{really} the first instance of the \underline{quasi}-conformal structure... " 
 }
 \label{fax}
\end{figure}
\vspace{-.3in}

\begin{tabular}{r | l}
& \\
 Real variable $f:X\to \R$  & Self-adjoint operator $H$ in Hilbert space  \\
Range $f(X)\subset \R$ of the variable & Spectrum of  the operator $H$\\ 
Composition $\phi\circ f$, $\phi$ measurable & Measurable functions $\phi(H)$ of self-adjoint operators  \\
Bounded complex variable $Z$ & Bounded operator $A$ in Hilbert space \\
Infinitesimal variable $dx$ & Compact operator $T$\\
Infinitesimal of order  $\alpha>0$  & Characteristic values $\mu_n(T)=O(n^{-\alpha})$ for $n\to \infty$ \\
 Algebraic operations on functions & Algebra of operators in Hilbert space \\ 
Integral of function $\int f(x)dx$ & $\displaystyle{\int\!\!\!\!\!\! -} T =$ coefficient of $\log(\Lambda)$ in $\Tr_\Lambda(T)$\\
Line element $ds^2=g_{\mu\nu} dx^\mu dx^\nu$ & $ds=\bullet\!\!\!\!-\!\!\!-\!\!\!-\!\!\!-\!\!\!-\!\!\bullet$ : Fermion propagator  $D^{-1}$\\
$d(a,b)=\,{\rm Inf}\,\int_\gamma\,\sqrt{g_{\mu\,\nu}\,dx^\mu\,dx^\nu}$ & $d(\mu,\nu)=\,{\rm Sup}\,\vert \mu(A)-\nu(A)\vert, \mid \ \Vert [D,A]\Vert\leq 1.$ \\ 
Riemannian geometry $(X,ds^2)$& Spectral geometry $(\cA,\cH,D)$\\
Curvature invariants & Asymptotic expansion of spectral action \\
Gauge theory & Inner fluctuations of the metric\\
Weyl factor perturbation & $D\mapsto \rho D\rho$ \\
Conformal Geometry & Fredholm module $(\cA,\cH,F)$, $F^2=1$. \\
Perturbation by Beltrami differential & $F\mapsto (aF+b)(bF+a)^{-1}$, $a=(1-\mu^2)^{-1/2}$, $b=\mu a$ \\
Distributional derivative & Quantized differential $dZ:=[F,Z]$ \\
Measure of conformal weight $p$ & $f \mapsto \displaystyle{\int\!\!\!\!\!\! -}f(Z)\vert dZ\vert^p$ \\
&
\end{tabular}

\subsection{Schwartz kernels and Schwarzian derivative}\label{Schw&S}
Let $V$ be a 1-dimensional manifold and let $\cH=L^2(V)$ be the Hilbert space of square integrable half densities:  $\xi(x)=f(x) dx^{1/2}\in \cH$. The Schwartz kernel of an operator $T: \cH \to \cH$ is of the form: $k_T(x,y)dx^{1/2}dy^{1/2}$. This means that, as a function of two variables, the kernel depends on choices of positive sections of the 1-dimensional bundle of $1/2$-densities. By varying the choice of a section, i.e. by dividing it with a positive function $\rho(x)$, the kernel $k_T(x,y)$ gets modified accordingly  to  $\rho(x)\rho(y)k_T(x,y)$. Next lemma detects an invariant of the above change

\begin{lem}
 The differential form 
 \begin{equation}\label{qdiff0}\omega=\partial_x\partial_y\log(k_T(x,y))dxdy\end{equation}
  is independent of the choice of sections of the bundle of 1/2-densities and defines an invariant $\omega(T)$ of the operator $T$.
\end{lem}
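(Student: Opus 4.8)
The plan is to exploit the fact that the only ambiguity in the kernel $k_T(x,y)$ — the choice of a trivializing positive section of the bundle of $1/2$-densities on $V$ — acts on $k_T$ by multiplication by a \emph{separable} factor $\rho(x)\rho(y)$, a product of a function of $x$ alone and a function of $y$ alone. Passing to logarithms turns this factor into the additive term $\log\rho(x)+\log\rho(y)$, a sum of a function of $x$ and a function of $y$, and any such term is annihilated by the mixed second derivative $\partial_x\partial_y$. Hence $\partial_x\partial_y\log k_T(x,y)$, and therefore the bi-form $\omega$, does not depend on the section.

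Concretely, I would first record the transformation law already stated above: replacing the section $dx^{1/2}$ by $\rho(x)\,dx^{1/2}$ with $\rho>0$ turns the kernel $k_T(x,y)$ into $\tilde k_T(x,y)=\rho(x)\rho(y)k_T(x,y)$, because the kernel of $T$ acting on half-densities $\xi=f\,dx^{1/2}$ picks up one factor of $\rho$ from the argument variable and one from the value variable. Then
$$
\log\tilde k_T(x,y)=\log\rho(x)+\log\rho(y)+\log k_T(x,y),
$$
so applying $\partial_x\partial_y$ kills the first two summands and leaves $\partial_x\partial_y\log\tilde k_T=\partial_x\partial_y\log k_T$, i.e. $\omega$ is unchanged. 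One should note that only the combination $\partial_x\log k_T=\partial_x k_T/k_T$ actually enters, and this is a well-defined function on the open set $\{k_T\neq 0\}$ independently of any branch of $\log$; thus $\omega$ is intrinsically the bi-form $\bigl(\partial_x\partial_y k_T/k_T-(\partial_x k_T)(\partial_y k_T)/k_T^{2}\bigr)\,dx\,dy$ there.

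To see that $\omega$ is genuinely a differential form on $V\times V$ and not merely a coordinate-bound expression, I would run the same computation under a change of local coordinates $x=x(u)$, $y=y(v)$: the half-density nature of the kernel makes $k_T$ transform by the separable factor $(dx/du)^{1/2}(dy/dv)^{1/2}$, whose logarithm is again a sum of a function of $u$ and a function of $v$ and hence invisible to $\partial_u\partial_v$, while the chain rule gives $\partial_u\partial_v\log k_T=(dx/du)(dy/dv)\,\partial_x\partial_y\log k_T$, exactly compensating the transformation of $du\,dv$. Setting $\omega(T):=\omega$ then yields the asserted invariant. I do not expect a real obstacle: the whole content is the separability of the two gauge ambiguities together with the annihilation of separable terms by a mixed partial; the only point deserving a word of care is the locus $\{k_T=0\}$, on which $\omega$ is permitted to have poles.
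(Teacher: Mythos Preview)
Your argument is correct and follows essentially the same route as the paper's own proof: take the logarithm of the transformed kernel $\rho(x)\rho(y)k_T(x,y)$, observe that the extra terms $\log\rho(x)+\log\rho(y)$ are separable and hence killed by $\partial_x\partial_y$. Your additional remarks on coordinate independence and on the locus $\{k_T=0\}$ go beyond what the paper spells out, but the core mechanism is identical.
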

\proof One sees that by taking the log of the variation of the Schwartz kernel 
$$
\log(\rho(x)\rho(y)k(x,y))=\log(k(x,y))+ \log \rho(x)+ \log \rho(y)
$$
and then applying $\partial_x\partial_y$, the output is independent of  $\rho$. \endproof 
 Let now $V=\R$ and $T=\cF$  the Fourier  transform,  then   $k_{\cF}(x,y)=\exp(-2\pi ixy)$ and the differential form becomes $\omega(\cF)=-2\pi i dxdy$.\newline
Note that the Schwartz kernel of the Fourier transform already appeared in \S \ref{sec:2.3} (in matrix form) and there it played a crucial role in the determination of the K-theory of endomorphisms of $\bars$. 

Next, we compute $\omega(\qd f)$ for the quantized differential of a function $f$ on $\R$.

\begin{lem} Let   $f$ be a smooth, complex valued function on $\R$. Then
\begin{equation}\label{qdiff} \omega(\qd f)=\left(\frac{f'(x) f'(y)}{(f(x)-f(y))^2}-\frac{1}{(x-y)^2}\right)dxdy.
\end{equation}
The restriction of $\omega(\qd f)$ to the diagonal is  $\frac 16\cS(f)dx^2$, where $\cS(f)=\frac{f^{(3)}(x)}{f'(x)}-\frac{3 f''(x)^2}{2 f'(x)^2}$ is the Schwarzian derivative.
\end{lem}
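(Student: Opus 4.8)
The plan is to compute the Schwartz kernel of $\qd f=[F,f]$ explicitly and then feed it into formula \eqref{qdiff0} of the previous lemma. On $\cH=L^2(\R)$ the operator $F$ is the sign of the Hilbert transform, whose Schwartz kernel is $c\,\mathrm{pv}\,\frac{1}{x-y}$ for a nonzero constant $c$ (with the normalization $\widehat{Ff}(\xi)=\mathrm{sgn}(\xi)\hat f(\xi)$ one has $c=\frac{1}{i\pi}$). Since $f$ acts by multiplication, $\qd f=[F,f]$ has Schwartz kernel
\[
k_{\qd f}(x,y)=c\,\frac{f(x)-f(y)}{x-y},
\]
which, for $f$ smooth, extends across the diagonal to a smooth function equal there to $c\,f'(x)$; in particular $\log k_{\qd f}$ is well defined wherever $f(x)\neq f(y)$ (and wherever $f'\neq 0$ on the diagonal, which is the relevant range for the Schwarzian).

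First I would observe that in $\omega(\qd f)=\partial_x\partial_y\log k_{\qd f}(x,y)\,dxdy$ the constant $c$ drops out, since $\log k_{\qd f}(x,y)=\log c+\log\bigl(f(x)-f(y)\bigr)-\log(x-y)$. A two-step differentiation gives $\partial_x\partial_y\log\bigl(f(x)-f(y)\bigr)=\frac{f'(x)f'(y)}{(f(x)-f(y))^2}$ and $\partial_x\partial_y\log(x-y)=\frac{1}{(x-y)^2}$, which is exactly the asserted formula \eqref{qdiff}. Each of the two terms is singular on the diagonal, but their difference extends smoothly, in keeping with the smoothness of $k_{\qd f}$.

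It then remains to restrict $\omega(\qd f)$ to the diagonal, i.e.\ to evaluate $\lim_{y\to x}\bigl(\frac{f'(x)f'(y)}{(f(x)-f(y))^2}-\frac{1}{(x-y)^2}\bigr)$. I would set $y=x+t$ and Taylor-expand at $x$: writing $f(x)-f(x+t)=-f'(x)\,t\,(1+at+bt^2+\cdots)$ with $a=\frac{f''}{2f'}$, $b=\frac{f^{(3)}}{6f'}$, one gets $(f(x)-f(x+t))^2=(f')^2t^2\bigl(1+2at+(a^2+2b)t^2+\cdots\bigr)$ and $f'(x)f'(x+t)=(f')^2\bigl(1+2at+3bt^2+\cdots\bigr)$, whence
\[
\frac{f'(x)f'(x+t)}{(f(x)-f(x+t))^2}=\frac{1}{t^2}+(b-a^2)+O(t).
\]
The $\frac{1}{t^2}$ term cancels against $\frac{1}{(x-y)^2}=\frac{1}{t^2}$, so the limit is $b-a^2=\frac{f^{(3)}}{6f'}-\frac{(f'')^2}{4(f')^2}=\frac16\cS(f)$, giving $\omega(\qd f)$ restricted to the diagonal equal to $\frac16\cS(f)\,dx^2$.

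The argument is a routine computation; the only points deserving care are pinning down the Schwartz kernel of $F$ on half-densities on $\R$ (equivalently, recognizing $F$ as the sign of the Hilbert transform) so that $k_{\qd f}$ has the stated form, and bookkeeping the exact cancellation of the diagonal singularities in the Taylor expansion. Neither is a genuine obstacle.
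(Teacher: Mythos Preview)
Your proof is correct and follows the same approach as the paper: identify the Schwartz kernel of $\qd f$ as (a constant times) $\frac{f(x)-f(y)}{x-y}$ and apply the formula $\omega=\partial_x\partial_y\log k\,dxdy$. The paper's proof is in fact terser than yours---it states the kernel and asserts \eqref{qdiff} without writing out the differentiation, and it does not carry out the Taylor expansion for the diagonal restriction at all---so your explicit verification of the $\frac16\cS(f)$ limit is a welcome addition rather than a deviation.
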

\begin{proof}
The Schwartz kernel of the quantized differential is $k(x,y)=\frac{i}{\pi}\frac{f(x)-f(y)}{x-y}$. By applying  \eqref{qdiff0} one obtains the stated equality \eqref{qdiff}.\end{proof}

Note, in particular, that the second statement of the above lemma shows that the quantized differential determines the Schwarzian  derivative.  

\subsection{The Main Lemma}\label{sectmainlem}
The conceptual reason for the link between Weil's explicit formula and the trace of the compression of the scaling action on Sonin's space \cite{weilpos, quasiinner} is rooted in the following two general facts.

Let $\cH$ be a Hilbert space,  and let $F = 2P-1$ be the operator defining the quantized calculus. An operator $T$ in $\cH$ is encoded by a matrix  $T=\begin{bmatrix} T_{11}&T_{12}\\ T_{21}&T_{22}\end{bmatrix}$ with 
$$
T_{11}=(1-P)T(1-P), \quad T_{12}=(1-P)TP, \quad T_{21}=PT(1-P), \quad T_{22}=PTP
$$

\begin{prop}(\cite{quasiinner} Prop. 5.4) \label{fromqi}
  Let $U=\begin{bmatrix} U_{11}&U_{12}\\0&U_{22}\end{bmatrix}$  be the upper-triangular matrix of an operator in $\cH$. Then $U$ is unitary if and only if the following conditions hold
 \begin{enumerate}
     \item $U_{11}$ is an isometry
     \item $U_{22}$ is a coisometry

\item $U_{12}$ is a partial isometry from $\ker(U_{22})$ to the $\text{coker}(U_{11})$ \end{enumerate}
\end{prop}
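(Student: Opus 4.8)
The plan is to pass to the orthogonal decomposition $\cH=(1-P)\cH\oplus P\cH$ and write $U$ as the $2\times 2$ operator matrix with first row $A:=U_{11}$, $B:=U_{12}$, lower-right corner $D:=U_{22}$, and $0$ in the lower-left corner. A bounded operator on a Hilbert space is unitary exactly when $U^*U=1$ and $UU^*=1$; so the first step is to expand these two products block by block, which produces the list of identities
\[
A^*A=1,\quad DD^*=1,\quad A^*B=0,\quad BD^*=0,\quad B^*B+D^*D=1,\quad AA^*+BB^*=1 .
\]
The proposition is then exactly the assertion that this list is equivalent to conditions (1)--(3).

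Next I would record the elementary dictionary relating the two lists. From $A^*A=1$ one gets that $AA^*$ is the orthogonal projection onto the closed subspace $\mathrm{ran}\,A$, hence $1-AA^*$ is the projection onto $\ker(A^*)=\text{coker}(A)$; dually, $DD^*=1$ makes $D^*D$ the projection onto $(\ker D)^\perp$; and ``$B$ is a partial isometry from $\ker D$ to $\text{coker}(A)$'' unwinds precisely to the pair of identities $B^*B=P_{\ker D}$, $BB^*=P_{\ker(A^*)}$. With this in hand the implication (1)--(3) $\Rightarrow$ unitarity is a one-line check: $AA^*+BB^*=AA^*+(1-AA^*)=1$ and $B^*B+D^*D=P_{\ker D}+P_{(\ker D)^\perp}=1$, while $\mathrm{ran}\,B=\mathrm{ran}(BB^*)=\ker(A^*)$ gives $A^*B=0$ and $\mathrm{ran}(B^*)=\mathrm{ran}(B^*B)=\ker D$ gives $BD^*=0$.

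For the converse I would read the six block identities backwards. Conditions (1) and (2) are literally $A^*A=1$ and $DD^*=1$. For (3), the identity $AA^*+BB^*=1$ exhibits $BB^*=1-AA^*$ as an orthogonal projection, so $B$ is automatically a partial isometry with final space $\mathrm{ran}(BB^*)=\ker(A^*)=\text{coker}(A)$; then $BD^*=0$ forces $\mathrm{ran}(B^*)\subseteq\ker D$, while $B^*B=1-D^*D$ (using $DD^*=1$) identifies $B^*B$ with $P_{\ker D}$, so the initial space of $B$ is exactly $\ker D$. The computation is routine $2\times 2$ bookkeeping; the only steps that deserve care --- and hence the closest thing to an obstacle --- are the two facts hidden in the dictionary: that the isometry $U_{11}$ has closed range, so that $\text{coker}(U_{11})$ genuinely equals $\ker(U_{11}^*)=(\mathrm{ran}\,U_{11})^\perp$, and the standard lemma that an operator $B$ with $BB^*$ (equivalently $B^*B$) a projection is necessarily a partial isometry whose initial and final spaces are the ranges of $B^*B$ and $BB^*$.
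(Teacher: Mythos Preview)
Your argument is correct: the block expansion of $U^*U=1$ and $UU^*=1$ yields exactly the six identities you list, and your dictionary between those identities and conditions (1)--(3) is the standard one, with the two subtleties (closed range of an isometry, and the characterization of partial isometries via $B^*B$ or $BB^*$ being a projection) correctly flagged.

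There is nothing to compare against here: the paper does not supply its own proof of this proposition but simply quotes it from \cite{quasiinner} (Prop.~5.4). Your write-up is precisely the routine $2\times 2$ verification one would expect, and it would serve perfectly well as a self-contained proof in place of the citation.
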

Next Lemma (see also  \cite{scalingH} Lemma 3.4) relates the sign of the quantized differential of a triangular unitary operator $U$ to the kernel of the compression $PUP$  of $U$ on $P$ (corresponding to Sonin's space in the application related to Weil's explicit formula).
\begin{lem}\label{mainlemma}
With the  notation of Proposition \ref{fromqi}, let $f$ be a positive operator and $S$ the orthogonal projection to $\ker(U_{22})$. Let  $\tilde S=\begin{bmatrix}0&0\\0&S\end{bmatrix}$. Then
\[
-\frac 12 \text{Tr}(fU^* \qd U)=\text{Tr}(f \tilde S)\ge 0
\]
\end{lem}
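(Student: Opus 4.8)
The plan is to compute both traces explicitly in terms of the block decomposition and reduce everything to the structure of $U$ supplied by Proposition \ref{fromqi}. First I would write out $\qd U = [F,U] = FU - UF$ with $F = 2P-1$. Since $F$ acts as $+1$ on $PU$ and as $-1$ on $(1-P)$, a short block computation gives
\[
\qd U = \begin{bmatrix} 0 & 2U_{12}\\ 0 & 0\end{bmatrix} - \text{(lower part, which vanishes since $U_{21}=0$)} = \begin{bmatrix} 0 & 2U_{12}\\ 0 & 0\end{bmatrix},
\]
using that $U$ is upper-triangular. Hence $U^*\qd U = \begin{bmatrix} U_{11}^* & 0\\ U_{12}^* & U_{22}^*\end{bmatrix}\begin{bmatrix} 0 & 2U_{12}\\ 0 & 0\end{bmatrix} = \begin{bmatrix} 0 & 2U_{11}^*U_{12}\\ 0 & 2U_{12}^*U_{12}\end{bmatrix}$. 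Therefore $-\tfrac12\Tr(fU^*\qd U) = -\Tr\!\big(f\begin{bmatrix} 0 & U_{11}^*U_{12}\\ 0 & U_{12}^*U_{12}\end{bmatrix}\big)$, and since $f$ is a positive operator on $\cH$ (not necessarily block-diagonal) one must be slightly careful: writing $f = \begin{bmatrix} f_{11}&f_{12}\\ f_{21}&f_{22}\end{bmatrix}$, the trace picks out $-\Tr(f_{21}U_{11}^*U_{12}) - \Tr(f_{22}U_{12}^*U_{12})$. I would then argue (as in \cite{quasiinner}) that the hypothesis forces $f$ to commute appropriately, or rather restrict attention to the case where $f$ is taken block-diagonal in the application, so that the cross term drops and one is left with $-\Tr(f_{22}U_{12}^*U_{12}) = -\Tr(f\, U_{12}^*U_{12})$ — but note the sign: this is $\le 0$, so in fact one needs $\Tr(fU^*\qd U) \le 0$ and the correct reading is $-\tfrac12\Tr(fU^*\qd U) = \Tr(f U_{12}^*U_{12}) \ge 0$ once the trace is cyclically rearranged as $\Tr(U_{12} f U_{12}^*)$ with $f \ge 0$.

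The second step is to identify $U_{12}^*U_{12}$ with $\tilde S$. By Proposition \ref{fromqi}(iii), $U_{12}$ is a partial isometry from $\ker(U_{22})$ onto $\operatorname{coker}(U_{11})$; hence $U_{12}^*U_{12}$ is the orthogonal projection onto the initial space $\ker(U_{22})$, which is exactly $S$ viewed inside the $P$-block. Under the embedding into the full $2\times2$ matrix picture this projection is precisely $\tilde S = \begin{bmatrix} 0&0\\0&S\end{bmatrix}$. Thus $-\tfrac12\Tr(fU^*\qd U) = \Tr(f\tilde S)$.

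The final step is positivity of $\Tr(f\tilde S)$. Since $f \ge 0$ and $\tilde S$ is an orthogonal projection, $\tilde S f \tilde S \ge 0$, and $\Tr(f\tilde S) = \Tr(\tilde S f \tilde S) \ge 0$ (this trace is finite under whatever trace-class hypotheses are in force in the application — e.g. $f\tilde S$ trace class — which I would inherit from the setup rather than re-prove). This completes the argument.

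The main obstacle is the bookkeeping in the first step: getting the signs right in $\qd U = [F,U]$ and correctly handling the off-diagonal blocks of $f$ — in the abstract statement $f$ is only assumed positive, so either the cross term $\Tr(f_{21}U_{11}^*U_{12})$ must be shown to vanish or cancel (which it does not in general), or the statement is implicitly using $f$ block-diagonal / commuting with $P$ as in the Weil-positivity application. I would resolve this by following \cite{quasiinner} Prop. 5.4 closely and noting that the identity $\qd U = 2\begin{bmatrix}0&U_{12}\\0&0\end{bmatrix}$ already kills the $(1,1)$ and $(2,1)$ entries of $U^*\qd U$, so only $f_{12}$ (paired against the $(2,1)$-entry, which is $0$) and $f_{22}$ survive after taking the trace; the genuinely relevant contribution is $\Tr(f_{22}\,U_{12}^*U_{12})$, and the off-diagonal piece $\Tr(f_{21}\cdot 0)$ is automatically absent. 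Hence positivity is clean once the block algebra is done carefully.
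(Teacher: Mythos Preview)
Your approach---computing $\qd U$ as a block matrix and then $U^*\qd U$---is viable and slightly different from the paper's, but there are two genuine errors in the execution.

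\textbf{Sign error.} With the block ordering $T_{11}=(1-P)T(1-P)$ etc., one has $F=\begin{bmatrix}-1&0\\0&1\end{bmatrix}$, so
\[
\qd U = FU-UF = \begin{bmatrix}0&-2U_{12}\\0&0\end{bmatrix},
\]
not $+2U_{12}$. This is why you found yourself with the wrong sign and tried to ``fix'' it by cyclic rearrangement (which of course cannot change the sign of a trace). With the correct sign, $-\tfrac12 U^*\qd U = \begin{bmatrix}0&U_{11}^*U_{12}\\0&U_{12}^*U_{12}\end{bmatrix}$ and the positivity comes out right.

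\textbf{The off-diagonal term.} Your final paragraph claims that in $\Tr(fM)$ the block $f_{21}$ pairs with the $(2,1)$ entry of $M$, which is zero. This is wrong: the trace pairs $f_{21}$ with $M_{12}=U_{11}^*U_{12}$, so the term $\Tr(f_{21}U_{11}^*U_{12})$ does not disappear by that reasoning, and no block-diagonality assumption on $f$ is available. The correct argument is that $U_{11}^*U_{12}=0$ as an operator: by Proposition~\ref{fromqi}(i),(iii), $U_{11}$ is an isometry and the range of $U_{12}$ lies in $\mathrm{coker}(U_{11})=\ker(U_{11}^*)$. With this, $-\tfrac12 U^*\qd U=\tilde S$ exactly, and your identification $U_{12}^*U_{12}=S$ (initial projection of a partial isometry) is fine.

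For comparison, the paper avoids both pitfalls in one stroke: using unitarity to write $U^*\qd U=2(U^*PU-P)$, one computes $U^*PU=\begin{bmatrix}0&0\\0&U_{22}^*U_{22}\end{bmatrix}$ directly (already block-diagonal, no off-diagonal bookkeeping), and then $1-U_{22}^*U_{22}=S$ since $U_{22}$ is a coisometry. This gives $-\tfrac12 U^*\qd U=\tilde S$ in two lines. Your route works once the two fixes above are made, but the paper's is cleaner precisely because it exploits $U^*U=1$ before breaking into blocks.
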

\begin{proof}
We first show that $-\frac 12 U^*\qd U=\tilde S$.  We have $$U^*PU=\begin{bmatrix}U_{11}^*&0\\U_{12}^*&U_{22}^* \end{bmatrix}\begin{bmatrix}0&0\\0&1 \end{bmatrix}\begin{bmatrix}U_{11}&U_{12}\\0&U_{22} \end{bmatrix}= \begin{bmatrix}0&0\\0&U_{22}^* \end{bmatrix} \begin{bmatrix}U_{11}&U_{12}\\0&U_{22} \end{bmatrix}= \begin{bmatrix}0&0\\0&U_{22}^*U_{22}\end{bmatrix}.$$ 
Then it follows that $U^*PU-P= \begin{bmatrix}0&0\\0&U_{22}^*U_{22}-1\end{bmatrix}$. One also has $U_{22}^*U_{22}-1=-S$ since $U_{22}$ is a coisometry. Then the  claim follows from the equality $U^*\qd U=2(U^*PU-P)$ together with the fact that the trace of a product of two positive operators is nonnegative.
\end{proof}

\subsection{The semilocal functional equation}\label{semiloc}
In this part we explain the functional equation in the  semilocal case, by  giving a proof of the local functional equation that extends naturally to the semilocal framework. We first introduce some notations.\newline
We denote by $I$ the unitary inversion operator  in the subspace $L^2(\R)^{\text{ev}}$ of even functions in $L^2(\R)$ defined as $I(\xi)(x):=\vert x \vert^{-1}\xi(x^{-1})$. The scaling operator $\rep(\lambda)$, defined for $\lambda\in \rs$, is the unitary operator  in $L^2(\R)^{\text{ev}}$ given by $\rep(\lambda)(\xi)(x)=\lambda^{-1/2}\xi(\lambda^{-1}x)$. One has $I\circ \rep(\lambda)=\rep(\lambda^{-1})\circ I$.
 The Fourier transform $\Fa$is the unitary operator  in $L^2(\R)^{\text{ev}}$ defined by 
 $$
 \Fa(\xi)(y)=\int \xi(x)\exp(-2 \pi i x y) dx.
 $$
 One has $\Fa\circ \rep(\lambda)=\rep(\lambda^{-1})\circ \Fa$. It follows that  $I\circ \Fa$ commutes with the scaling 
 $$
 (I\circ \Fa)\circ \rep(\lambda)= I\circ (\Fa\circ \rep(\lambda))=I\circ (\rep(\lambda^{-1})\circ \Fa)=(I\circ \rep(\lambda^{-1}))\circ \Fa=\rep(\lambda)\circ (I\circ \Fa).
 $$
 The representation $\rep$ of $\rs$ by scaling in $L^2(\R)^{\text{ev}}$ is unitarily equivalent to the multiplication action with the character $\lambda^{-is}$ in $L^2(\R)$, through the Fourier transform $\Fm\circ w$, where $w$ is the unitary isomorphism
 $$
 w:L^2(\R)^{\text{ev}}\to L^2(\R_+^*,dx/x), \quad w(\xi)(\lambda)=\lambda^{1/2}\xi(\lambda)\quad\forall \lambda>0
 $$
 and $\Fm$ denotes the multiplicative Fourier transform 
 $$
 \Fm(f)(s):=\int_0^\infty f(v)v^{-is}d^*v, \ \ d^*v:=dv/v.
 $$
 The von Neumann algebra generated in  $L^2(\R)$ by the multiplications operators  by $\lambda^{-is}$ is equal to $L^\infty(\R)$ acting by multiplication. The Bicommutant Theorem ensures that a unitary operator commuting with this representation is a multiplication operator by a function of modulus one on $\R$. Then it follows that the composite operator in  $L^2(\R)$ 
 \begin{equation}\label{unitaryU}
 (\Fm\circ w)\circ (I\circ \Fa)\circ (\Fm\circ w)^{-1}
\end{equation}
 is the multiplication by a function $u\in L^\infty(\R)$ of modulus one. Next, we shall develop on the following schematic diagram (Figure \ref{schemata})

\begin{figure}[H]
\includegraphics[scale=0.45]{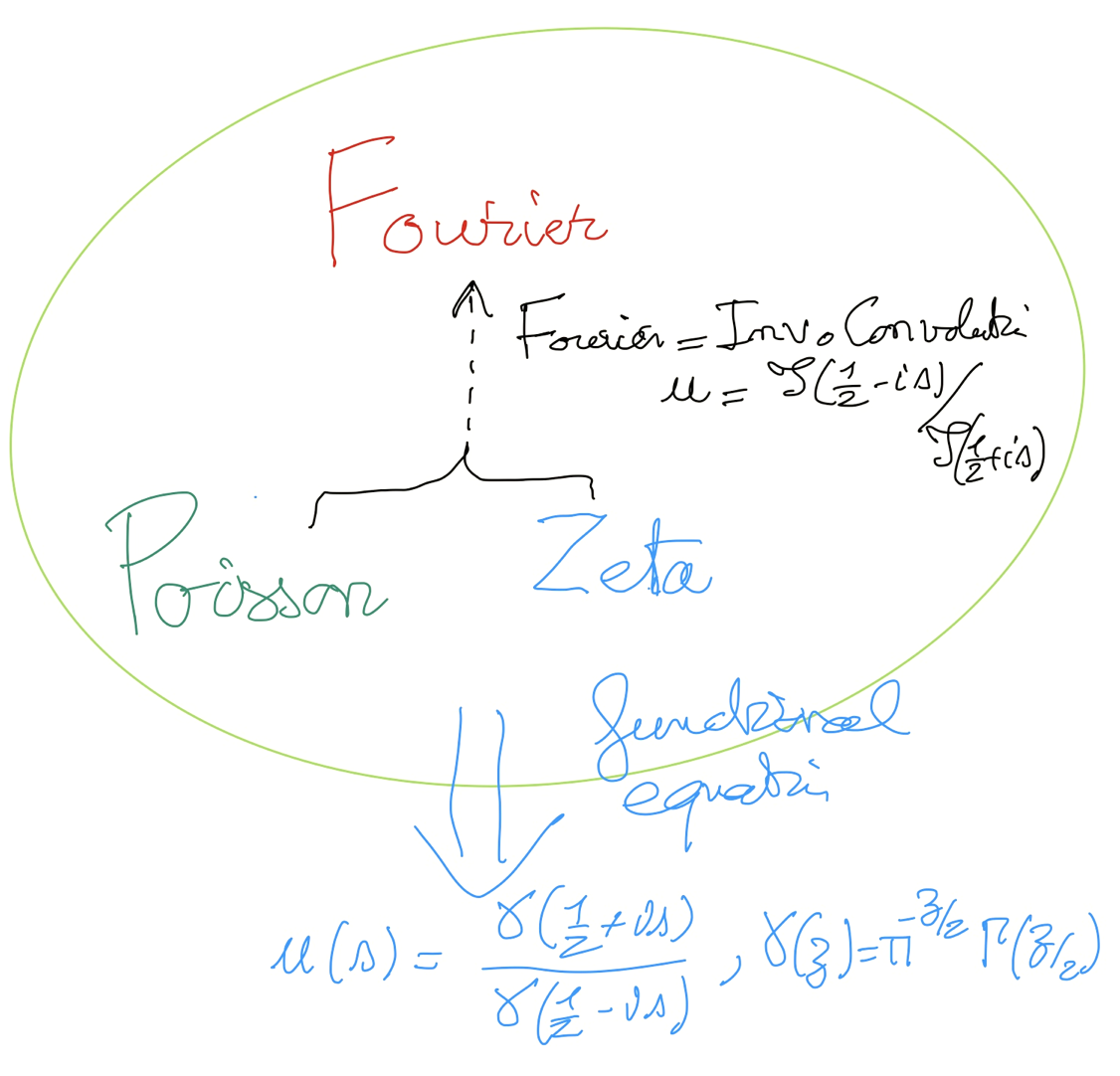}
\centering
\caption{The Poisson formula yields a formula for the unitary $u$ as a ratio of the zeta function on the critical line and the functional equation for the complete zeta function expresses  this ratio as the ratio of local archimedean factors.}\label{schemata}
\end{figure}
\begin{fact}\label{fact1}
The function $u$ is the ratio of local archimedean factors
\begin{equation}\label{unitaryU1}
u(s)=\frac{\zeta(1/2-is)}{\zeta(1/2+is)}= \frac{\gamma(1/2+is)}{\gamma(1/2-is)}, \qquad \gamma(z):=\pi^{-z/2}\Gamma(z/2).
\end{equation}
\end{fact}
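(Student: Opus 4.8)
The plan is to identify the multiplier $u$ of \eqref{unitaryU} by testing it against a dense family of vectors, and to read off its value from the Poisson summation formula together with the functional equation of the completed zeta function; the two expressions for $u(s)$ in \eqref{unitaryU1} will then be precisely the two sides of that functional equation. We already know that \eqref{unitaryU} is multiplication by a function $u\in L^\infty(\R)$ of modulus one (by the Bicommutant Theorem, as above), so only its value remains to be computed. For an even Schwartz function $\xi$ write ${\mathcal M}(\xi)(z):=\int_0^\infty\xi(\lambda)\lambda^{z}\,d^*\lambda$ for its Mellin transform and $\hat\xi:=\Fa\xi$. Unwinding the definitions of $w$, $\Fm$, $\Fa$, $I$ one gets
\[
(\Fm\circ w)(\xi)(s)={\mathcal M}(\xi)(\tfrac12-is),\qquad (\Fm\circ w)(I\circ\Fa\,\xi)(s)={\mathcal M}(\hat\xi)(\tfrac12+is),
\]
so that $u(s)={\mathcal M}(\hat\xi)(\tfrac12+is)\big/{\mathcal M}(\xi)(\tfrac12-is)$ for every such $\xi$.

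The zeta function enters through the theta map $\Theta(\xi)(\lambda):=\sum_{n\ge1}\xi(n\lambda)$. A termwise integration gives
\[
\Fm\bigl(\lambda^{1/2}\Theta(\xi)\bigr)(s)=\zeta(\tfrac12-is)\,{\mathcal M}(\xi)(\tfrac12-is),
\]
valid first where the Dirichlet series of $\zeta$ converges and then by analytic continuation. Poisson summation $\sum_{n\in\Z}\xi(n\lambda)=\lambda^{-1}\sum_{n\in\Z}\hat\xi(n\lambda^{-1})$, applied to the even $\xi$ with the $n=0$ terms isolated, yields
\[
\lambda^{1/2}\Theta(\xi)(\lambda)=\lambda^{-1/2}\Theta(\hat\xi)(\lambda^{-1})+\tfrac12\bigl(\lambda^{-1/2}\hat\xi(0)-\lambda^{1/2}\xi(0)\bigr).
\]
Restricting to the dense subspace $\cS^{\rm ev}_0$ of even Schwartz functions with $\xi(0)=\hat\xi(0)=0$ kills the boundary terms and makes $\lambda^{1/2}\Theta(\xi)$ rapidly decaying at $0$ and $\infty$; applying $\Fm$ to the resulting identity $\lambda^{1/2}\Theta(\xi)(\lambda)=\lambda^{-1/2}\Theta(\hat\xi)(\lambda^{-1})$ (and using that $\lambda\mapsto\lambda^{-1}$ induces $s\mapsto-s$ on $\Fm$) gives
\[
\zeta(\tfrac12-is)\,{\mathcal M}(\xi)(\tfrac12-is)=\zeta(\tfrac12+is)\,{\mathcal M}(\hat\xi)(\tfrac12+is).
\]
Since the functions ${\mathcal M}(\xi)(\tfrac12-i\,\cdot)$, $\xi\in\cS^{\rm ev}_0$, span a dense subspace, combining this with the first paragraph forces $u(s)=\zeta(1/2-is)/\zeta(1/2+is)$.

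For the last equality of \eqref{unitaryU1} one invokes the functional equation of the completed zeta function: with $\Lambda(z):=\gamma(z)\zeta(z)=\pi^{-z/2}\Gamma(z/2)\zeta(z)$ one has $\Lambda(z)=\Lambda(1-z)$, hence $\zeta(1/2-is)/\zeta(1/2+is)=\gamma(1/2+is)/\gamma(1/2-is)$; this ``completed'' form is the one exhibiting $u$ as a fixed unimodular function, the apparent $0/0$ at a zero of $\zeta$ on the critical line being removed by the nowhere-vanishing Gamma factors, and since $\Lambda(z)=\Lambda(1-z)$ is itself the Poisson formula for the Gaussian $e^{-\pi x^2}$, the whole statement flows from Poisson summation. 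I expect the only real difficulty to be analytic bookkeeping rather than anything conceptual: justifying the interchange of summation and Mellin integration, checking that $\lambda^{1/2}\Theta(\xi)\in L^2(\rs,d^*\lambda)$ exactly on $\cS^{\rm ev}_0$ so that every operator above is honestly defined, and verifying that the polar contributions — from the $n=0$ terms, and on a larger domain from the pole of $\zeta$ at $1$ and from its trivial zeros — cancel exactly. This last point is just the classical ``unfolding'' computation underlying Riemann's proof of the functional equation, recast operator-theoretically.
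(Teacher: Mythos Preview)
Your proof is correct and follows essentially the same route as the paper: both restrict to even Schwartz functions with $\xi(0)=\hat\xi(0)=0$, use the map $\lambda^{1/2}\Theta(\xi)=\cE(\xi)$ together with Poisson summation to obtain $\zeta(\tfrac12-is)\,(\Fm\circ w)(\xi)(s)=\zeta(\tfrac12+is)\,(\Fm\circ w)(\Fa\xi)(-s)$, and read off $u$ from the definition \eqref{unitaryU}. Your version is a bit more explicit in two places---you spell out the Mellin identities $(\Fm\circ w)(\xi)(s)={\mathcal M}(\xi)(\tfrac12-is)$ and $(\Fm\circ w)(I\Fa\xi)(s)={\mathcal M}(\hat\xi)(\tfrac12+is)$, and you justify the second equality in \eqref{unitaryU1} via the functional equation $\Lambda(z)=\Lambda(1-z)$---but the argument is the same.
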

\begin{proof}
Let  $f\in \cS(\R)$ be an even function with $f(0)=0=\int f(x)dx$. The Poisson formula ($x>0$) \[
x\sum_{n>0} f(nx)= \sum_{n>0}\hat f(\frac nx)
\] implies
$x^{1/2}\sum_{n>0}f(nx)= x^{-\frac 12}\sum_{n>0} \hat f(n/x)$. Define
 \begin{equation}\label{mapE}\cE(f)(x):=x^{1/2}\sum_{n>0}f(nx)\end{equation} then we have: $
\cE(f)(x)=\cE(\Fa f)(x^{-1})$. 
 By applying the multiplicative Fourier $\F_\mu$ on the right hand side of this equality, one has ($s\in\R$)
\begin{equation}\label{mapE1}
\Fm(\cE(f))(s)=\zeta(1/2-is)\Fm(wf)(s)
\end{equation}
and by the above equality:   $\Fm(\cE(f))(s)=\Fm(\cE(\Fa f))(-s)$. Thus with \eqref{mapE1} one obtains
$$
\zeta(1/2-is)\Fm(wf)(s)=\zeta(1/2+is)\Fm(w \Fa f)(-s)
$$
Finally, in view of \eqref{unitaryU} one writes
$$
(\Fm\circ w)\circ (I\circ \Fa)(f)(s)=\Fm(w \Fa f)(-s)=\frac{\zeta(1/2-is)}{\zeta(1/2+is)}\Fm(wf)(s)
$$
Thus  $u=\frac{\zeta(1/2-is)}{\zeta(1/2+is)}$ is  the ratio of local factors.
\end{proof}

An argument similar to the one just developed in the above proof applies in the semilocal case (when finitely many places are involved, inclusive the archimedean) \cite{scalingH}. More precisely, one lets $S$ be a finite set of places of $\Q$ containing the archimedean place and one considers the semilocal adele class space 
 \begin{equation}\label{XQS}
X_{\Q,S}:=\A_{\Q,S}/\Gamma .
\end{equation}
where $ \A_{\Q,S}=\prod_{v\in S} \Q_v$ is the product of the local fields acted upon (by multiplication) by the subgroup 
\begin{equation}\label{GL1QS}
\Gamma= \{ \pm p_1^{n_1} \cdots p_k^{n_k} \, :\,  p_j
\in S \setminus\{ \infty \} \,,\, n_j\in \Z\}\subset \Q^\times.
\end{equation}
Even though $X_{\Q,S}$ is  a noncommutative space, at the topological level and when $S$ contains at least three places, this space is well behaved at the measure theory level because the additive and multiplicative Haar measures are equivalent on the finite product of local fields \cite{Co-zeta,CMbook}. There is a natural Hilbert space  $L^2(X_{\Q,S})$ of square integrable functions on $X_{\Q,S}$.  Moreover, and very importantly, the Fourier transform $\fourier_\alpha$ on $\A_{\Q,S}$ descends to a unitary operator $\underline\fourier_\alpha$ in $L^2(X_{\Q,S})$. After passing to the dual of the group $C_{\Q,S}=\GL_1(\A_{\Q,S})/\Gamma$ by the Fourier transform $\fourier_C $ and using the inversion $I$, $\fourier_\alpha$ reads as the multiplication by a function $u$ of modulus $1$ on the dual of $C_{\Q,S}$ \cite{CMbook} (Chapter 2)
\begin{equation}\label{FwIPhisemi}
\underline\fourier_\alpha= w^{-1}\circ I \circ \fourier_C   ^{-1}
\circ u \circ   \fourier_C   \circ  w.
\end{equation}
Let $\pi_v$ be  the projection  from the dual of $C_{\Q,S}$ to the dual  $\widehat{\Q_v^\times}$,  the unitary $u$ is of the form 
\begin{equation}\label{RieSiegelsemi}
u= \prod_S\, u_v \circ \pi_v,
\end{equation}
where the $u_v \in L^\infty(\widehat{\Q_v^\times})$ are the functions involved in the local functional equation of Tate \cite{tate}
\begin{equation}\label{PhicF}
\int_{\Q_v^\times} \fourier_{\alpha_v}   (f)(x)\chi(x^{-1})|x|^{1/2}d^*x
 = u_v(\chi)\,\int_{\Q_v^\times} f(x)\chi(x)|x|^{1/2} d^*x.
\end{equation}
Here, $\fourier_{\alpha_v}$ denotes the Fourier transform relative to the canonical additive 
character $\alpha_v$ of the local field $\Q_v$. One knows that the function $u_v$ is 
the ratio of the local factors of $L$-functions. When restricting to the ``zeta sector'' \ie to the subspace of $L^2(X_{\Q,S})$ of functions invariant under the action of the maximal compact subgroup of $C_{\Q,S}$, the function $u$ is the product of ratios of local factors of the Riemann zeta function:  $u=\rho_\infty\prod \rho_p$. This can be proved directly using the same argument as in the above proof of Fact \ref{fact1} (see  \cite{scalingH}). In \cite{quasiinner}, we have developed the notion of quasi-inner function as a generalization of Beurling's notion of inner function which we first related to the main Lemma \ref{mainlemma}. In Theorem 4.1 of \opcit we showed that the product $u=\rho_\infty\prod \rho_p$ of ratios of local factors of the Riemann zeta function  is a quasi-inner function 

\begin{thm}\label{thmquasiinnerintro} The product $u=\rho_\infty\prod \rho_p$ of  ratios of local factors  over a finite set of places of $\Q$ containing the archimedean place is a quasi-inner function relative to $\C_-=\{z\in\C\mid \Re(z)\leq \frac 12\}$.
\end{thm}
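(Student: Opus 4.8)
The natural plan is to prove the statement one local factor at a time, in the spirit of the semilocal decomposition \eqref{RieSiegelsemi}, and to assemble at the end. Since, restricted to the zeta sector, $u=\rho_\infty\prod_{p\in S\setminus\{\infty\}}\rho_p$ is a \emph{finite} product, and since the class of quasi-inner functions relative to a fixed half-plane is stable under finite products (clear for genuine inner functions, and part of the formalism of \cite{quasiinner}), it suffices to prove that each factor $\rho_v$, $v\in S$, is quasi-inner relative to $\C_-$. Throughout I identify $\R$ with $\partial\C_-=\{\Re z=\tfrac12\}$ via $s\mapsto z=\tfrac12+is$, so that the critical line corresponds to real $s$ and the interior of $\C_-$ to the upper half $s$-plane. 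In the variable $z$ one has $\rho_v(z)=L_v(1-z)/L_v(z)$, where $L_p(z)=(1-p^{-z})^{-1}$ for finite $p$ and $L_\infty(z)=\gamma(z)=\pi^{-z/2}\Gamma(z/2)$ as in \eqref{unitaryU1}; hence each $\rho_v$ satisfies the two elementary symmetries
\[
\rho_v(1-z)\,\rho_v(z)=1,\qquad \overline{\rho_v(\bar z)}=\rho_v(z),
\]
the first from the shape of the functional equation, the second from the reality of the data defining $\rho_v$. On $\partial\C_-$, where $1-z=\bar z$, these two together already give $|\rho_v(z)|=1$.

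For a finite prime $p\in S$ one has $\rho_p(z)=(1-p^{-z})/(1-p^{-(1-z)})$, and I would check three points. (a) $|\rho_p|=1$ on $\partial\C_-$: this is the instance of the remark above recorded through $p^{-(1-z)}=\overline{p^{-z}}$ for $\Re z=\tfrac12$, so that $\rho_p(\tfrac12+it)=(1-p^{-z})/\overline{(1-p^{-z})}$. (b) $\rho_p$ is holomorphic on the open half-plane $\Re z<\tfrac12$, since $1-p^{-(1-z)}$ vanishes only for $z\in 1+\tfrac{2\pi i}{\log p}\Z$, which all have $\Re z=1$ and hence lie off $\C_-$. (c) The zeros of $\rho_p$ inside $\C_-$ are exactly the points $z\in\tfrac{2\pi i}{\log p}\Z$ on the imaginary axis, all at distance $\tfrac12$ from $\partial\C_-$, and $\sum_{k\in\Z}\big(1+(2\pi k/\log p)^2\big)^{-1}<\infty$, so they form a Blaschke sequence for the half-plane. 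Together with the two symmetries this exhibits $\rho_p$ as a bounded inner function divided by a Blaschke product, i.e. as a quasi-inner function relative to $\C_-$ in the sense of \cite{quasiinner}; this case is completely elementary.

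The archimedean factor is
\[
\rho_\infty(z)=\frac{\gamma(z)}{\gamma(1-z)}=\pi^{\frac12-z}\,\frac{\Gamma(z/2)}{\Gamma((1-z)/2)} .
\]
Here I would check: (a) $|\rho_\infty|=1$ on $\partial\C_-$, from $1-z=\bar z$ together with $\overline{\gamma(z)}=\gamma(\bar z)$ ($\gamma$ being real on the real axis), so that $\rho_\infty(\tfrac12+it)=\gamma(z)/\overline{\gamma(z)}$; (b) the only singularities of $\rho_\infty$ inside $\C_-$ are the simple poles of $\Gamma(z/2)$ at $z=0,-2,-4,\dots$, while the zeros of $\rho_\infty$ — which come from the poles of $\Gamma((1-z)/2)$ at $z=1,3,5,\dots$ — all lie off $\C_-$; (c) the growth of $\rho_\infty$ on $\C_-$ is controlled: using Stirling's formula uniformly in vertical strips, together with the reflection formula $\Gamma(w)\Gamma(1-w)=\pi/\sin\pi w$ near the negative real axis, one shows that $|\rho_\infty(z)|$ stays bounded away from fixed neighbourhoods of its poles, decays as $\Re z\to-\infty$, and grows at most polynomially in $|\Im z|$. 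Feeding (a)--(c) and the two symmetries into the characterization of quasi-inner functions of \cite{quasiinner} then gives that $\rho_\infty$, and hence $u$, is quasi-inner relative to $\C_-$; this is Theorem 4.1 of \cite{quasiinner}.

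The main obstacle is step (c) for the archimedean factor. The finite factors $\rho_p$ are rational in $p^{-z}$ and their zeros inside $\C_-$ form Blaschke sequences, so their quasi-innerness is a one-line Blaschke-product computation. The archimedean factor, by contrast, carries the $\Gamma$-function: its poles $0,-2,-4,\dots$ inside $\C_-$ are infinitely many and do \emph{not} satisfy the Blaschke condition (the series $\sum_{n\ge 0}(\tfrac12+2n)/(1+4n^2)$ diverges), so $\rho_\infty$ is not a meromorphic inner function in the naive sense and one genuinely needs the weaker notion of \cite{quasiinner}. The real work is to match the precise Stirling asymptotics of $\Gamma(z/2)/\Gamma((1-z)/2)$ on $\C_-$ — including the Hadamard-type convergence factors accounting for the non-Blaschke poles and the exponential factor contributed by $\pi^{1/2-z}$ and by Stirling — against the defining conditions of a quasi-inner function, so that the flexibility built into that notion is exactly what accommodates the archimedean local factor. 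Once this is done, the finite-product stability from the first paragraph finishes the proof.
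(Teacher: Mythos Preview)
The paper does not actually prove this theorem; it is quoted verbatim as Theorem~4.1 of \cite{quasiinner}, so there is no in-paper argument to compare against. Your outline --- treat each local factor separately and assemble via stability under finite products --- is the natural one and is presumably close in spirit to what is done in \cite{quasiinner}; indeed you yourself invoke that reference at the end.

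There is, however, a concrete slip in your treatment of the finite primes. You describe $\rho_p$ as ``a bounded inner function divided by a Blaschke product'' and call this case ``completely elementary''. But $\rho_p(z)=(1-p^{-z})/(1-p^{-(1-z)})$ is \emph{unbounded} on the interior of $\C_-$: as $\Re z\to-\infty$ one has $|p^{-z}|\to\infty$ while $|p^{-(1-z)}|\to 0$, hence $|\rho_p(z)|\to\infty$. Moreover $\rho_p$ has no poles in $\C_-$, so there is no Blaschke denominator to divide out. Thus $\rho_p$ is \emph{not} a classical Beurling inner function, and the quasi-inner notion of \cite{quasiinner} is already needed at the finite places, not only for $\rho_\infty$. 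Your diagnosis that the ``main obstacle'' is the Stirling analysis of the archimedean factor is therefore misleading: the real content lies in the definition of quasi-inner itself --- which, as the paper's remark immediately following the theorem makes clear, is an operator-theoretic (Calkin-algebra) condition on the multiplication operator, not a purely function-theoretic growth or factorization condition --- and in verifying that each $\rho_v$ meets it. Your points (a)--(c) are correct ingredients, but the packaging into ``bounded inner over Blaschke'' does not hold, and the argument as written does not close without the actual definition and criteria from \cite{quasiinner}.
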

This fact shows that the quantized differential $u^*\qd u$ fulfills the hypothesis of the main Lemma \ref{mainlemma} ``modulo infinitesimals" \ie working in the Calkin algebra  quotient of the algebra of bounded operators by the ideal of compact ones. 

\subsection{Quantized calculus and the semilocal trace formula}\label{sectsemiloc}

The key fact enabling the development of the semilocal framework of \S \ref{semiloc} is the equivalence of the additive and multiplicative Haar measures on a finite product of local fields. This fact fails in the global adelic case. The reason why one can go around this difficulty in order to understand the location of the zeros of the Riemann zeta function is that Weil's criterion  
  \begin{equation}\label{weilnegav}
RH \iff \sum_v { W}_v(g*g^*)\leq 0 \qquad \forall g\in C_c^\infty(\R_+^*)\quad \text{with}\quad    \widehat g(\pm \frac i2)=0,
\end{equation}
only involves finitely many primes at a time. Indeed, while  the sum on the right  is extended to all places of $\Q$,  for $v=p$ a non archimedean prime, the functional
 \begin{equation}\label{bombieriexplicit1int}
  W_p(f):=(\log p)\sum_{m=1}^\infty p^{-m/2}\left(f(p^m)+f(p^{-m})\right)
 \end{equation}
vanishes on test functions  with support in the interval $(p^{-1},p)$. Thus
  ${ W}_v(g*g^*)\neq 0$ for only finitely many $v$. The functionals $W_v$   are given by the Riemann-Weil explicit formula
\begin{equation}\label{explicit form}
\widehat f(\frac i2)-\sum_{\frac 12+is \in Z}\widehat f(s)+\widehat f(-\frac i2)=\sum_v { W}_v(f), \quad  \widehat f(s):=\int_0^\infty f(x)x^{-is}d^*x,\quad d^*x=\frac{dx}{x}
\end{equation}
where $Z$ is the multi-set of the non-trivial zeros of the Riemann zeta function.
   The archimedean distribution $W_\R$ is defined as
 \begin{equation}\label{bombieriexplicit2int}
  W_\R(f)=(\log 4\pi +\gamma)f(1)+\int_{1}^\infty\left(f(x)+f(x^{-1})-2x^{-1/2} f(1)\right)\frac{x^{1/2}}{x-x^{-1}}d^*x.
 \end{equation}
The key equality now is  the local trace formula of \cite{Co-zeta} (as revisited in \cite{CMbook})
 \begin{equation}\label{intersection}
 \sum_{v\in S} 	 W_v (f)=\frac 12 \tr(\widehat f  u^*\,\qd u)
 \end{equation}
where the notations for the  right hand side are as in \S \ref{semiloc}.  Lemma \ref{mainlemma} would imply the negativity criterion \eqref{weilnegav}, if  $u$ fulfilled the required hypothesis of that lemma (in \S \ref{semiloc} we pointed out that the failure is only by an infinitesimal). When $S$ is reduced to the single archimedean place,  this difficulty can be bypassed by analyzing the effect of the infinitesimal \cite{weilpos}, and  the expected negativity of the criterion can be derived from Lemma \ref{mainlemma}, where the role of the orthogonal projection to $\ker(U_{22})$ is played by the orthogonal projection  ${\bf S}$ on Sonin's space, one has

\begin{thm}(\cite{weilpos})\label{mainthmintro} Let $g\in C_c^\infty(\R_+^*)$  have  support in the interval $[2^{-1/2},2^{1/2}]$ and Fourier transform  vanishing at $\frac i2$ and $0$. Then  the following inequality holds 
\begin{equation}\label{maininequintronew}
	- W_\R(g*g^*)=W_\infty(g*g^*)\geq \Tr(\rep(g)\, {\bf S}\,\rep(g)^*).
	\end{equation}	
\end{thm}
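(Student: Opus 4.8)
The plan is to specialise the semilocal trace formula \eqref{intersection} to the single archimedean place and to run the Main Lemma \ref{mainlemma} on the unitary $u$ of Fact \ref{fact1}, correcting for the fact that $u$ is only \emph{quasi}-inner rather than inner. First I would record the archimedean instance: with $S=\{\infty\}$, so $\A_{\Q,S}=\R$ and, by Fact \ref{fact1}, $u(s)=\gamma(\tfrac12+is)/\gamma(\tfrac12-is)$, $\gamma(z)=\pi^{-z/2}\Gamma(z/2)$, formula \eqref{intersection} together with the normalisation $W_\infty=-W_\R$ between the archimedean term \eqref{bombieriexplicit2int} of the explicit formula and the trace-formula side reads
\[
W_\infty(g*g^*)=-\tfrac12\,\tr\!\big(\widehat{g*g^*}\ u^*\,\qd u\big).
\]
I would then move to the spectral picture \eqref{unitaryU} on $L^2(\R)$, where $u$ acts by multiplication by a function of modulus one and, $\rep$ being intertwined with multiplication by $\lambda^{-is}$, the operator $\rep(g)$ becomes multiplication by $\widehat g$; thus $\widehat{g*g^*}=\rep(g)\rep(g)^*$ is a positive operator commuting with $u$. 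Since $\mathrm{supp}\,g\subseteq[2^{-1/2},2^{1/2}]$, every non-archimedean $W_p$ vanishes on $g*g^*$ by \eqref{bombieriexplicit1int}, so the display above is already the full left-hand side of Weil's criterion for such $g$; the point is to bound it below by the manifestly nonnegative Sonin trace.

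Next I would feed in the quasi-inner property. By Theorem \ref{thmquasiinnerintro}, $u$ is quasi-inner relative to $\C_-=\{\Re z\le\tfrac12\}$: in the $2\times2$ decomposition of \S\ref{sectmainlem} with respect to the Hardy projection $P$ of $\C_-$ the off-triangular block $Pu(1-P)$ is an infinitesimal, so $u$ agrees modulo a compact operator with a genuine triangular unitary $U=\left[\begin{smallmatrix}U_{11}&U_{12}\\0&U_{22}\end{smallmatrix}\right]$ subject to Proposition \ref{fromqi}. Using $u^*\,\qd u=2(u^*Pu-P)$ (valid for any unitary, as in the proof of Lemma \ref{mainlemma}) and, for the triangular model, $U^*\,\qd U=-2\widetilde{\bf S}$ where ${\bf S}$ is the orthogonal projection onto $\ker(U_{22})=\ker(PUP)$ — which one identifies with Sonin's space — Lemma \ref{mainlemma} applied to $U$ and the positive operator $f=\rep(g)\rep(g)^*$ gives $-\tfrac12\tr(\widehat{g*g^*}\,U^*\qd U)=\Tr(\rep(g)\,{\bf S}\,\rep(g)^*)\ge 0$. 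Subtracting the two expressions for the quantized differential then yields the exact identity
\[
W_\infty(g*g^*)=\Tr\!\big(\rep(g)\,{\bf S}\,\rep(g)^*\big)-\tr\!\big(\widehat{g*g^*}\,(u^*Pu-U^*PU)\big),
\]
so that the theorem reduces to proving $\tr\!\big(\widehat{g*g^*}\,(u^*Pu-U^*PU)\big)\le 0$.

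This last inequality is where the hypotheses on $g$ are really used, and I expect it to be the main obstacle. The operator $u^*Pu-U^*PU$ is a difference of projections and a priori carries no sign; it is, however, an explicit infinitesimal, built from the ratio of $\Gamma$-factors defining $u$ and from the low-dimensional defect of $u$ from being inner, localised at the zeros and poles of $\Gamma(z/2)$ in $\C_-$. Since $\widehat{g*g^*}$ is, by Paley--Wiener, an entire function of exponential type $\le\log2$ vanishing at $\pm\tfrac i2$ and $0$, I would evaluate this trace by a Mellin/contour-shift computation: the vanishing of $\widehat g$ at $\pm i/2$ and $0$ kills exactly the pole contributions that could flip the sign, and the exponential-type bound controls the deformed contour, so that the sign is forced by the position of the $\Gamma$-zeros and -poles relative to the line $\Re z=\tfrac12$. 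The genuine difficulty is that $u^*Pu-U^*PU$ need not be trace class, so the trace formula and the splitting $u=U+(\text{compact})$ have to be interpreted within a careful regularisation — this is precisely the ``treating separately the infinitesimal'' of \cite{weilpos}, and it explains why the result is obtained for $g$ supported in $[2^{-1/2},2^{1/2}]$ with $\widehat g(\pm i/2)=\widehat g(0)=0$ rather than in full generality.
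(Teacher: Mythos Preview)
The paper does not actually prove this theorem: it is stated with a citation to \cite{weilpos}, and the surrounding text only supplies the conceptual outline --- apply the local trace formula \eqref{intersection} for $S=\{\infty\}$, invoke the Main Lemma \ref{mainlemma}, and then ``bypass the difficulty by analyzing the effect of the infinitesimal''. Your proposal reproduces precisely this outline, and your algebraic bookkeeping (the identity $u^*\qd u=2(u^*Pu-P)$, the reduction to the sign of $\tr(\widehat{g*g^*}(u^*Pu-U^*PU))$) is correct as far as it goes. In that sense your plan and the paper's sketch coincide.

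What is missing is the same thing the paper defers to \cite{weilpos}: the actual control of the infinitesimal correction. You acknowledge this is ``the main obstacle'' and offer only a heuristic (Mellin/contour shift, Paley--Wiener bounds, vanishing of $\widehat g$ at $\tfrac{i}{2}$ and $0$ killing pole contributions). That heuristic is in the right spirit but is not a proof; in particular you have not shown that the error term is even well-defined as a trace, nor that its sign is forced. There is also a gap in your setup: you posit an abstract triangular unitary $U$ with $u-U$ compact and then \emph{identify} $\ker(U_{22})$ with Sonin's space. The quasi-inner property of Theorem \ref{thmquasiinnerintro} only says that one off-diagonal block of $u$ is compact; it does not hand you a canonical triangular $U$, and for an arbitrary such $U$ there is no reason $\ker(PUP)$ should coincide with the concrete Sonin space ${\bf S}$. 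In \cite{weilpos} the projection ${\bf S}$ is a specific, explicitly defined object (tied to the cutoff projections $\cP_\lambda$, $\widehat{\cP_\lambda}$), and the inequality is obtained by a direct analysis in that concrete model rather than via an abstract triangular approximant. So your proposal is a faithful paraphrase of the strategy, but it stops exactly where the real work begins.
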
 
In the next \S \ref{radical} we shall see that a difficulty to extend this result in the semilocal case is that the restriction of Weil's quadratic form to test functions with support  in the interval $[\lambda^{-1},\lambda]\subset \R_+^*$ admits (for large values of $\lambda$) extremely small eigenvalues.  This fact prevents  the use of the approximation method developed  in \cite{weilpos}.

\subsection{The radical of the Weil quadratic form}\label{radical}
Weil's quadratic form
\begin{equation}\label{weilquad1}
QW(f,g):=\sum_{1/2+is\in Z} 	\overline{\widehat f(\bar s)}\widehat g(s)
 \end{equation}
admits a non-trivial radical when working with general test functions (\ie if one drops the compact support condition of \eqref{weilnegav}). This radical contains the range of the map $\cE$  defined on the codimension two subspace $\sr0$ of even Schwartz functions fulfilling the equalities $f(0)=0=\widehat f(0)$ by the formula 
\begin{equation}
\cE(f)(x)=x^{1/2}\sum_{n>0}f(nx) \qquad \forall  f\in \sr0.\label{mapeeintro}
\end{equation} 
Indeed,  elements $h=\cE(f)$ of this range fulfill $\widehat h(s)=0$ when $1/2+is\in Z$.\newline
 The Riemann-Weil  explicit formula expresses $QW(f,g)$ on test functions $f,g$ whose support is contained in $[\lambda^{-1},\lambda]\subset \rs$ as a finite sum, thus 
  in a form suitable for numerical testings since the sum involves primes less than  $\lambda^2$. In \cite{ccspectral}    we provided numerical evidence to the fact   that as $\lambda$ increases  the  operator in $\cH(\lambda):=L^2([\lambda^{-1},\lambda],d^*u)$ associated to the quadratic form $QW_\lambda$ (restriction of $QW$) admits  a finite number of extremely small positive eigenvalues. For instance,  we have found that when $\lambda^2=11$  the smallest positive eigenvalue is $2.389\times 10^{-48}$. The presence of these minuscule positive eigenvalues  is explained conceptually by the fact  that the  radical of  Weil's quadratic form contains the range of the  map $\cE$. In \opcit we also gave an excellent approximation of the related eigenfunctions and we  showed that even though  RH implies  $QW_\lambda>0$, (thus that its radical is $\{0\}$),  one can nevertheless  construct, by making use of \eqref{mapeeintro},   functions $g$ with  support in $[\lambda^{-1},\lambda]$ fulfilling: $QW_\lambda(g)\ll \Vert g\Vert^2$. Indeed, let $\cP_\lambda$ and $\widehat{\cP_\lambda}$ be the cutoff projections in the Hilbert space $L^2(\R)^{\rm ev}$, then the projection $\cP_\lambda$ is given by the multiplication with the characteristic function of the interval  $[-\lambda,\lambda]\subset \R$. The projection $\widehat{\cP_\lambda}$ is its conjugate by the Fourier transform $\Fa$. If the even function $f\in \sr0$ belongs to the range of $\cP_\lambda$, then the support of $\cE(f)$ is contained in $(0,\lambda]\subset \rs$. On the other hand, when $f\in \sr0$ is in the range of $\widehat{\cP_\lambda}$ the Poisson formula $
\cE(f)(x)=\cE(\Fa f)(x^{-1})$
 shows that the support of $\cE(f)$ is contained in $[\lambda^{-1},\infty)$. The obstruction to obtain an element $\cE(f)$ in the radical of $QW_\lambda$ is provided by the equality $\cP_\lambda \cap \widehat\cP_\lambda=\{0\} $. The  seminal work of Slepian and Pollack \cite{Slepian, Sl, Slepian0} on band limited functions shows that while $\cP_\lambda \cap \widehat\cP_\lambda=\{0\} $ the prolate functions for small enough eigenvalues almost belong to $\cP_\lambda$ and to $\widehat\cP_\lambda$. Using this fact  we constructed in  \cite{ccspectral} functions denoted    ``prolate vectors'', on which $QW_\lambda$ takes extremely small, non-zero  values.  In the same article we verified concretely that  the orthogonalisation of the prolate vectors give an excellent approximation of the eigenvectors associated to the smallest eigenvalues of Weil's quadratic form.  The first $k+2$ prolate vectors, determine a $k$-dimensional subspace of $L^2( [\lambda^{-1},\lambda],d^*u)\simeq L^2(\rs/\lambda^{2\Z},d^*u)$ on which the associated orthogonal projection  $\Pi(\lambda,k)$ acts. Note that the construction of $\Pi(\lambda,k)$ only uses the prolate vectors  without any reference to $QW_\lambda$.

\subsection{Spectral triples and zeros of zeta} \label{spectrip}
The notion of a spectral triple $(\cA,\cH,D)$ formalizes the concept of a ``spectral geometry'', where the underlying ``space'' is encoded by the algebra  $\cA$ (in general non commutative) that acts by operators in the Hilbert space $\cH$. The self-adjoint  operator $D$ in $\cH$ encodes both the metric aspect of the space (by the formula $d(\mu,\nu)=\,{\rm Sup}\,\vert \mu(A)-\nu(A)\vert,~ \text{with}~ \ \Vert [D,A]\Vert\leq 1$, for the distance between states on the algebra  $\cA$) and the fundamental class in $K$-homology (and also in $KO$-homology if a real structure $J$ is present). \newline
In   \cite{ccspectral}, we have constructed spectral triples   $\Theta(\lambda,k)=(\cA(\lambda),\cH(\lambda),D(\lambda,k))$ making use of the orthogonal projections $\Pi(\lambda,k)$ recalled at the end of \S \ref{radical}.  For this application, the algebra $\cA$ is  $\cA(\lambda):=C^{\infty}(\R_+^*/\lambda^{2\Z})$ acting by multiplication on $\cH(\lambda):=L^2(\R_+^*/\lambda^{2\Z},d^*u)$. The operator $D(\lambda,k)$ is the finite rank perturbation
\begin{equation}\label{iDintro}
D(\lambda,k):=(1-\Pi(\lambda,k))\circ D_0\circ (1-\Pi(\lambda,k)), \ \  D_0=-iu\partial_u
\end{equation}
of the standard Dirac operator $D_0=-iu\partial_u$ (with periodic boundary conditions when viewed in $L^2( [\lambda^{-1},\lambda],d^*u)\simeq L^2(\R_+^*/\lambda^{2\Z})$).
In  Proposition 4.2 of \opcit  we explain  how  we are able to grasp the zeros of the Riemann zeta function up to height $t = 2 \pi  \mu$, 
by computing the spectra of the operators $D(\lambda,k)$ for $\lambda^2\leq \mu$. The computation of the involved prolate vectors 
 only requires the use of integers 
less than the integer part of $\lambda$, because all other  terms in the sum involved in the definition of the map $\cE$ vanish due to the support
condition. This means that we  only use integers $n$ between $1$ and $\lambda$. In this way we have found a remarkable agreement with the first 31 zeros of zeta only implementing the integers $2,3,4$!.\newline
This fact is all the more remarkable since the above restriction on the involved integers (in the sum defining the map $\cE$)  coincides exactly with the restriction in the partial sums occurring in the Riemann-Siegel formula (see 
 \cite{Berry} and \cite{Patterson} \S 6.1). 
 While these findings are largely depending on computer calculations of spectra of large matrices, we have provided their conceptual explanation by introducing  the notion of zeta-cycle \cite{ccspectral}. With $\Sigma_\mu$ denoting the Poincar\' e series operator, \ie the linear map defined on functions $g:\R_+^*\to \C$ by the  formula
\begin{equation}\label{sigmap}
	(\Sigma_\mu g)(u):=\sum_{k\in\Z} g(\mu^ku),
\end{equation}
 We can state the following result, where for $\mu>1$ one lets $C$ be the circle $C:=\R_+^*/\mu^\Z$
  \begin{thm}\label{spectralreal} $(i)$~The spectrum of the action of the multiplicative group $\rs$ on the orthogonal of $\Sigma_\mu \cE(\sr0)$ in $L^2(C)$ is formed of imaginary parts of zeros of zeta on the critical line.\newline
$(ii)$~Let $s>0$ be such that $\zeta(\frac 12+is)=0$, then any circle of length an integral multiple of $2\pi /s$ is a zeta cycle and its spectrum contains $is$.\end{thm}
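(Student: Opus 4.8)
The plan is to diagonalise the scaling action on $L^2(C)$ and then identify the orthogonal complement of $\Sigma_\mu\cE(\sr0)$ one character at a time, with \eqref{mapE1} doing the essential work. Write $L=\log\mu$ for the length of the circle $C=\rs/\mu^\Z$ and let $e_k(u):=u^{i\omega_k}$, $\omega_k:=2\pi k/L$, $k\in\Z$; these form an orthogonal basis of $L^2(C)$ and are the joint eigenvectors of the unitary scaling action $\vartheta(\lambda)\xi=\xi(\lambda^{-1}\,\cdot\,)$ of $\rs$, with $\vartheta(\lambda)e_k=\lambda^{-i\omega_k}e_k$. First I would record that $\overline{\Sigma_\mu\cE(\sr0)}$ is $\vartheta$-invariant: $\cE$ intertwines the scaling on $L^2(\R)^{\mathrm{ev}}$ with the scaling on $C$ after periodization (one checks $\vartheta(\lambda)\cE(f)=\cE(g)$ with $g(y)=\lambda^{-1/2}f(\lambda^{-1}y)\in\sr0$), and $\Sigma_\mu$ commutes with scalings. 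Hence the orthogonal complement is $\vartheta$-invariant too, and it suffices to decide for each $k$ whether $e_k$ lies in it; the spectrum is then read off as the set of the surviving $\omega_k$.

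The next step is the unfolding identity. For $f\in\sr0$ the Poisson summation used in the proof of Fact~\ref{fact1} gives $\cE(f)(x)=\cE(\Fa f)(x^{-1})$, so $\cE(f)$ is rapidly decreasing at both $0$ and $\infty$; in particular $\cE(f)\in L^1(\rs,d^*x)$, $\Sigma_\mu\cE(f)\in L^2(C)$, and unfolding the periodization yields
\[
\langle e_k,\Sigma_\mu\cE(f)\rangle_{L^2(C)}=\int_{\rs}\cE(f)(u)\,u^{-i\omega_k}\,d^*u=\Fm(\cE(f))(\omega_k)=\zeta(\tfrac12-i\omega_k)\,\Fm(wf)(\omega_k),
\]
the last equality being \eqref{mapE1}. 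Thus $e_k\perp\overline{\Sigma_\mu\cE(\sr0)}$ exactly when $\zeta(\tfrac12-i\omega_k)\,\Fm(wf)(\omega_k)=0$ for every $f\in\sr0$.

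Then I would dispose of the dichotomy. If $\zeta(\tfrac12-i\omega_k)=0$ this holds trivially and $e_k$ lies in the complement. If $\zeta(\tfrac12-i\omega_k)\neq0$ one only needs one $f\in\sr0$ with $\Fm(wf)(\omega_k)\neq0$; this exists because $f\mapsto\Fm(wf)(\omega_k)$ is a nonzero complex-linear functional on even Schwartz functions (it does not vanish on a Gaussian), and the codimension-two subspace $\sr0$ cannot lie inside its codimension-one kernel — so $e_k$ is not in the complement. Consequently the orthogonal of $\Sigma_\mu\cE(\sr0)$ in $L^2(C)$ equals $\overline{\mathrm{span}}\{e_k:\zeta(\tfrac12-i\omega_k)=0\}$ and the scaling spectrum on it is $\{\omega_k:\zeta(\tfrac12-i\omega_k)=0\}$. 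Since every $\omega_k$ is real, each $\tfrac12-i\omega_k$ in this set has real part $\tfrac12$, and by Schwarz reflection $\zeta(\tfrac12-i\omega_k)=\overline{\zeta(\tfrac12+i\omega_k)}$, so the spectrum consists of imaginary parts of zeros of $\zeta$ on the critical line — unconditionally, off‑critical zeros being simply never seen. This is $(i)$. For $(ii)$, given $s>0$ with $\zeta(\tfrac12+is)=0$ I would take $\mu=\exp(2\pi m/s)$ with $m\in\N$, so $C$ has length $L=\log\mu=2\pi m/s$ (an integral multiple of $2\pi/s$) and $\omega_k=ks/m$; for $k=m$ this gives $\omega_m=s$, while $\zeta(\tfrac12-is)=\overline{\zeta(\tfrac12+is)}=0$, so by the identity above $e_m$ survives in the orthogonal of $\Sigma_\mu\cE(\sr0)$. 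Hence that complement is non-trivial, $C$ is a zeta cycle realizing the zero $\tfrac12+is$, and $s=\omega_m$ lies in the scaling spectrum, i.e. $is$ lies in its spectrum in the convention of the statement.

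The hard part is the second horn of the dichotomy: one must know that zeros of $\zeta$ are the only obstruction to $e_k$ escaping the orthogonal complement, equivalently that the functionals $f\mapsto\Fm(wf)(\omega_k)$ are honestly nonzero on the constrained space $\sr0$, so that $\overline{\Sigma_\mu\cE(\sr0)}$ is as large as \eqref{mapE1} permits. Everything else — the $\vartheta$-invariance, the unfolding, and the legitimacy of evaluating $\Fm(\cE(f))$ at the real points $\omega_k$ — is soft once the rapid decay of $\cE(f)$ furnished by Poisson is in place.
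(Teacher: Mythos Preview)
The paper does not actually prove Theorem~\ref{spectralreal} here; it is quoted from \cite{ccspectral}, so there is no in-paper argument to compare against. Your line of attack---diagonalising the scaling by the characters $u^{i\omega_k}$, unfolding $\Sigma_\mu$ to turn the $L^2(C)$ pairing into $\Fm(\cE(f))(\omega_k)$, and then invoking \eqref{mapE1}---is the natural one and is almost certainly what the cited paper does.

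There is, however, a genuine slip in the dichotomy step. You assert that ``the codimension-two subspace $\sr0$ cannot lie inside its codimension-one kernel'', but this is backwards: a codimension-two subspace can perfectly well sit inside a codimension-one subspace (a line inside a plane). What you actually need is that the three linear functionals
\[
f\mapsto f(0),\qquad f\mapsto\int_\R f,\qquad f\mapsto\Fm(wf)(\omega_k)=\int_0^\infty f(v)\,v^{-1/2-i\omega_k}\,dv
\]
are linearly independent on even Schwartz functions, so that the common kernel of the first two (namely $\sr0$) is not contained in the kernel of the third. This is easy to see directly: testing a putative relation against functions supported in $(0,\infty)$ away from the origin eliminates the evaluation functional and forces $\beta\cdot 1+\gamma\cdot v^{-1/2-i\omega_k}\equiv 0$ as distributions on $(0,\infty)$, hence $\beta=\gamma=0$, and then $\alpha=0$. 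With this correction your argument for both $(i)$ and $(ii)$ goes through as written.
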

\begin{rem}
The spectral triples   $\Theta(\lambda,k)=(\cA(\lambda),\cH(\lambda),D(\lambda,k))$ have the same ultraviolet spectral behavior as the Dirac operator $D_0=-iu\partial_u$ on the circle 
$C=\R_+^*/\lambda^{2\Z}$. In particular the number of eigenvalues with absolute value less than $E$ grows linearly with $E$. The ultraviolet behavior of the zeros of the Riemann zeta function is given by  Riemann's formula  for the number $N(E)$ 
of zeros of imaginary part between $0$ and $E$, 
\begin{equation}\label{riembehave}
N(E)=\frac {E}{2\pi}\log \frac {E}{2\pi}-\frac {E}{2\pi} +O(\log E).
\end{equation}
The problem of finding a Dirac operator with the ultraviolet behavior \eqref{riembehave} is solved by the first author  and H.~Moscovici  in \cite{CMoscovici}. Remarkably, the solution involves the prolate spheroidal wave operator $W_\lambda$  whose commutation with the projections 
$\cP_\lambda$ and $\widehat\cP_\lambda$ plays a key role in \cite{Slepian, Sl, Slepian0}.
	
\end{rem}

 \subsection{Prolate vectors  and the semilocal framework}\label{sect37}
The construction of the prolate vectors (and of the projection $\Pi(\lambda,k)$) makes use of the map $\cE$.  In this part we exhibit the relation between this construction  and the semi-local framework, showing that the map $\cE$ appears naturally in the quotient $X_{\Q,S}$ for functions with small enough support.

  \begin{prop}\label{slprol} Let $\mu>1$, $\lambda=\mu^{1/2}$ and $f$ an even function on $\R$ with support in $[-\lambda,\lambda]$. Let $S=\{\infty,2,3,\ldots p'\}$, where $p'$ is the largest prime less than $\mu$. Let $\tilde f$ be the function on $X_{\Q,S}$ associated to the function $ \otimes_{v\in S\setminus \infty} 1_{\Z_v} \otimes f $ where $1_{\Z_v}$ is the characteristic function of the maximal compact subring $\Z_v\subset \Q_v$. Then the following  equality  holds
  \begin{equation}\label{slprol1}
  	 w(\tilde f)(u)=2\cE(f)(u)\qquad \forall u>\lambda^{-1}.
  \end{equation}  	
  \end{prop}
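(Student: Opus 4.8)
The plan is to unwind both sides of \eqref{slprol1} to explicit finite sums over $\Gamma$ and match them term by term; the whole content of the statement is that the support condition on $f$, together with the size of the primes allowed in $S$, makes the passage to the quotient by $\Gamma$ lose nothing.

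First I would recall the definitions involved. Write $F:=\big(\bigotimes_{v\in S\setminus\{\infty\}} 1_{\Z_v}\big)\otimes f$, a function on $\A_{\Q,S}=\prod_{v\in S}\Q_v$. The function $\tilde f$ on $X_{\Q,S}=\A_{\Q,S}/\Gamma$ attached to $F$ is its $\Gamma$-periodization, normalized (as always, so that $F\mapsto\tilde f$ is an $L^2$-map) by the factor $|x|_S^{1/2}$ with $|x|_S:=\prod_{v\in S}|x_v|_v$; and, on the zeta sector, the unitary $w$ of \eqref{FwIPhisemi} is the restriction of a function on $X_{\Q,S}$ along the embedding $\iota:\rs\to X_{\Q,S}$, $u\mapsto$ class of $(u;1,1,\dots,1)$ (archimedean component $u$, all finite components $1$), this being the map that realizes the identification of the zeta sector of $L^2(C_{\Q,S})$ with $L^2(\rs,d^*u)$. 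Since $|\iota(u)|_S=u$, combining these gives
\[
w(\tilde f)(u)=u^{1/2}\sum_{\gamma\in\Gamma} F\big(\gamma\cdot(u;1,\dots,1)\big)=u^{1/2}\sum_{\gamma\in\Gamma} f(\gamma u)\!\!\prod_{v\in S\setminus\{\infty\}}\!\! 1_{\Z_v}(\gamma),
\]
a sum which is in fact finite for each fixed $u$.

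Next I would identify the surviving terms. An element $\gamma\in\Gamma$ is a nonzero rational supported on the primes of $S\setminus\{\infty\}$, so $\prod_{v}1_{\Z_v}(\gamma)=1$ exactly when $\gamma\in\Z$, i.e.\ when $\gamma=\pm m$ with $m$ a positive integer all of whose prime factors lie in $S$. For such $\gamma$ the archimedean factor $f(\gamma u)=f(\pm mu)$ is nonzero only if $mu\le\lambda$, i.e.\ $m\le\lambda/u$. Here is the point: because $u>\lambda^{-1}$ one gets $m\le\lambda/u<\lambda^2=\mu$, hence every prime factor of $m$ is $<\mu$, hence $\le p'$ (as $p'$ is the largest prime below $\mu$), hence already in $S$ — so the ``smoothness'' constraint is automatic and may be dropped. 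Conversely every positive integer $m<\mu$ does occur as $\pm m\in\Gamma$, while every positive integer $m\ge\mu$ satisfies $mu>\mu\lambda^{-1}=\lambda$, so $f(mu)=0$ and such $m$ contribute nothing to either side.

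Finally I would assemble the count: each positive integer $m$ contributes the two terms $\gamma=\pm m$, namely $f(mu)+f(-mu)=2f(mu)$ since $f$ is even, so summing over all $m\ge 1$ (the extra, non-$S$-smooth ones contributing $0$ by the previous paragraph),
\[
w(\tilde f)(u)=2\,u^{1/2}\sum_{m\ge1} f(mu)=2\,\cE(f)(u),\qquad u>\lambda^{-1},
\]
as claimed. I do not expect a genuine obstacle here; the only delicate points are bookkeeping ones — fixing the normalization conventions for the periodization and for $w$ (exactly one factor $|x|^{1/2}$ in total, the factor $2$ coming from $\pm 1\in\Gamma$), recalling that $X_{\Q,S}$ is measure-theoretically harmless only once $S$ has at least three places, and using the \emph{strict} inequality $u>\lambda^{-1}$, which is exactly what excludes the boundary value $m=\mu$ (relevant when $\mu$ is itself prime) and so validates the implication ``$m<\mu\Rightarrow m$ is $S$-smooth'' on which everything rests.
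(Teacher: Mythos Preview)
Your proof is correct and follows essentially the same route as the paper: expand $w(\tilde f)(u)$ as a sum over $\Gamma$, use the $1_{\Z_v}$ factors to cut down to integral $\gamma$, use the support condition on $f$ together with $u>\lambda^{-1}$ to bound the surviving integers by $\mu$, and then invoke that every integer below $\mu$ is $S$-smooth. Your treatment of the boundary case $m=\mu$ via the strict inequality, and your remarks on the normalization producing exactly one $u^{1/2}$ and the factor $2$ from $\pm1\in\Gamma$, are in fact more explicit than the paper's own argument.
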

  \proof 
  For $u\in \rs$ one has by construction
  $$
  w(\tilde f)(u)=u^{1/2}\sum_{g\in\Gamma} (\otimes_{v\in S\setminus \infty} 1_{\Z_v} \otimes f)(g(1,1,\ldots,u))
  $$
The terms in the sum vanish unless $g$ is an integer since otherwise $g(1,1,\ldots)\notin \prod \Z_v$. Moreover in that case the terms are  equal to $f(gu)$. Thus the sum becomes 
$$
  w(\tilde f)(u)=u^{1/2}\sum_{g\in \Z\cap \Gamma} f(gu).  $$ 
  Assume   $u>\lambda^{-1}$.  Then for any integer $n$, with $\vert n\vert>\mu =\lambda^2$, one has $f(nu)=0$ since the support of $f$ is in $[-\lambda,\lambda]$. Moreover the following sets are equal since all prime factors of integers less than $\mu$ are in $S$
  $$
  Y=\Z\cap \Gamma\cap [-\mu,\mu]=\{\pm n\mid n\in \N, 0< n\leq \mu\}
  $$
  and thus for  $u>\lambda^{-1}$ one obtains (since $f$ is even)
  $$
  w(\tilde f)(u)=u^{1/2}\sum_{g\in Y} f(gu) =2\cE(f)(u)
   $$ 
   which gives the required equality.\endproof 
For each prime $p$ the characteristic function $1_{\Z_p}$  is its own Fourier transform on $\Q_p$ and this implies that the semi-local Fourier transform $\underline\fourier_\alpha$	 acts as the archimedean Fourier transform $\fourier_{e_\R}$ on functions  $\tilde f$ on the quotient $X_{\Q,S}$,  associated  as above to simple tensors $\otimes 1_{\Z_p} \otimes f$. 
 Thus 
  $$\underline\fourier_\alpha(\tilde f)=\widetilde{\fourier_{e_\R}(f)}. $$
  Moreover if the support of $f$ is contained in the ball of radius $\lambda$ the same holds for $\tilde f$. Together with Proposition \eqref{slprol} this fact suggests that the minuscule eigenvalues of the Weil quadratic form of \S \ref{radical} can be reinterpreted intrinsically in the semilocal framework, without using the map $\cE$, just by analyzing the relative position of the semilocal analogue of the cutoff projections $\cP_\lambda$ and $\widehat{\cP_\lambda}$.

\begin{acknowledgements}
The second author is partially supported by the Simons Foundation collaboration grant n. 691493.
\end{acknowledgements}

\end{document}